\newtheoremstyle{mytheoremstyle} 
    {0.6cm}                    
    {0.6cm}                    
    {\itshape}                   
    {}                           
    {\bfseries}                   
    {.}                          
    {.5em}                       
    {}  
\newtheoremstyle{mytheoremstarstyle} 
    {0.6cm}                    
    {0.6cm}                    
    {\itshape}                   
    {}                           
    {\bfseries}                   
    {.}                          
    {.5em}                       
    {\thmname{#1}\thmnote{ (#3)}}  
\newtheoremstyle{mydefinitionstyle} 
    {0.6cm}                    
    {0.6cm}                    
    {\normalfont}                   
    {}                           
    {\bfseries}                   
    {.}                          
    {.5em}                       
    {}  
\newtheoremstyle{mydefinitionstarstyle} 
    {0.6cm}                    
    {0.6cm}                    
    {\normalfont}                   
    {}                           
    {\bfseries}                   
    {.}                          
    {.5em}                       
    {\thmname{#1}\thmnote{ (#3)}}  
\newtheoremstyle{myremarkstyle} 
    {0.6cm}                    
    {0.6cm}                    
    {\normalfont}                   
    {}                           
    {\bfseries}            	       
    {.}                          
    {.5em}                       
    {}  
\newtheoremstyle{myremarkstarstyle} 
    {0.6cm}                    
    {0.6cm}                    
    {\normalfont}                   
    {}                           
    {\bfseries}            	       
    {.}                          
    {.5em}                       
    {\thmname{#1}\thmnote{ (#3)}}  
\theoremstyle{mytheoremstyle}
\newtheorem{theorem}{Theorem}
\newtheorem{lemma}[theorem]{Lemma}
\newtheorem{proposition}[theorem]{Proposition}
\theoremstyle{mytheoremstarstyle}  
\newtheorem{theorem*}{Theorem}
\newtheorem{corollary*}{Corollary}
\newtheorem{lemma*}{Lemma}
\newtheorem{proposition*}{Proposition}
\theoremstyle{mydefinitionstyle}\newtheorem{definition}[theorem]{Definition}
\theoremstyle{mydefinitionstarstyle}\newtheorem{definition*}{Definition}
\theoremstyle{myremarkstyle}\newtheorem{example}[theorem]{Example}\newtheorem{examples}[theorem]{Examples}
\theoremstyle{myremarkstarstyle}\newtheorem{remark*}{Remark}\newtheorem{example*}{Example}\newtheorem{examples*}{Examples}
\def\Hom{\mathrm{Hom}}
\def\im{\mathrm{\rm im}}
\def\bbZ{\mathbb{Z}}\def\bbR{\mathbb{R}}
\def\Z{\bbZ}\def\R{\bbR}
\def\incl{\mathrm{incl}}
\def\const{\mathrm{const}}
\def\id{\mathrm{\rm id}}
\def\pr{\mathrm{\rm pr}}
\def\sSet{\mathcal{S}\mathrm{et}_\Delta}
\def\Cat{\mathcal{C}\mathrm{at}}
\def\Top{\mathcal{T}\mathrm{op}}
\def\Ch{\mathcal{C}\mathrm{h}}
\def\scrC{\mathcal{C}}
\def\scrD{\mathcal{D}}
\def\scrE{\mathcal{E}}
\def\scrI{\mathcal{I}}
\newcommand{\xlongrightarrow}[1]{\xrightarrow{\hspace{1ex}#1\hspace{1ex}}}
\def\adj{\mathrm{adj}}
\def\pt{\mathrm{pt}}
\def\scrE{\mathcal{E}}
\def\ch{\mathrm{ch}}
\def\Map{\mathcal{M}\mathrm{ap}}
\def\TwoMon{\mathrm{2}\mathcal{M}\mathrm{on}}
\def\MonCat{\mathcal{M}\mathrm{on}\mathcal{C}\mathrm{at}}
\def\scrZ{\mathscr{Z}}
\def\Kan{\mathcal{K}\mathrm{an}}
\def\sAb{s\mathcal{A}\mathrm{b}}
\DeclareMathOperator{\sing}{sing}
\begin{document}
\renewcommand{\theenumi}{(\roman{enumi})}

\title[Refinements of the Chern-Dold Character]{Refinements of the Chern-Dold Character: Cocycle Additions in Differential Cohomology}
\author{Markus Upmeier}
\date{\today}
\maketitle

\begin{abstract}
The Chern-Dold character of a cohomology theory $E$ is a canonical transformation $E\rightarrow HV$ to ordinary cohomology. A spectrum representing $E$ gives homotopy theoretic cocycles for $E$, while $HV$ can be represented by singular cocycles. We construct a refinement of the Chern-Dold character to a transformation of the cocycle categories that takes the homotopical composition to the addition of singular cocycles. This is applied to construct additive structures at the level of differential cocycles for generalized differential cohomology.
\end{abstract}

\section{Introduction}

The Chern-Dold character (see~\cite{Dold-Relations-Cohomology}) is a natural transformation from an arbitrary generalized cohomology theory $E$ to ordinary cohomology with coefficients in the graded coefficient vector space $V^*=E^*(S^0)\otimes_\Z \R$:
\begin{equation}\label{classical-chern}
	\ch\colon E^*(X) \longrightarrow H^n(X;V)=\prod_{i+j=n} H^i(X; V^j)\qquad (X\in \Top^*)
\end{equation}
(cohomology, cochain groups, etc.~for pointed spaces are always reduced.)
Hence for ${E=H\Z}$ we get the standard map ${H^n(X,\Z)\rightarrow H^n(X,\R)}$. For topological $K$-theory, we get the ordinary Chern character $K^{0/1}(X) \rightarrow H^\mathrm{ev/od}(X)$.
\medskip

Ordinary cohomology can be represented by the abelian group of singular cocycles ${Z^n(X;V)}$. These form the objects of a strict monoidal category of cocycles $\scrZ^n(X)$. Similarly, given a spectrum ${(E_n, \varepsilon_n)}$ representing $E$ (with homeomorphisms ${\varepsilon_n^\mathrm{adj}\colon E_n\rightarrow \Omega E_{n+1}}$, which may always be arranged), maps ${X\rightarrow E_n}$ give cocycles for generalized cohomology. Loop composition in either direction gives two binary operations, identifying $E_n$ with ${\Omega^2 E_{n+2}}$. Endowed with these, the cocycles $\Map(X,E_n)$ for generalized cohomology form a $2$-monoidal category (see Section~\ref{2-monoidal}), a more sophisticated algebraic object than an abelian group.\medskip

We shall construct a refined Chern-Dold character between the cocycle categories in such a way that it \emph{preserves the algebraic structure} (strict addition of singular cocycles, loop composition):

\begin{theorem}\label{thm-refined-chern}
For any generalized cohomology theory $E$ and representing spectrum $(E_n,\varepsilon_n)$, there exists a natural family of $2$-monoidal functors
\begin{equation}\label{eq:chern-char}
	\ch_X\colon\Map(X, E_n) \rightarrow \scrZ^n(X).
\end{equation}
On isomorphism classes of objects, the functors \eqref{eq:chern-char} reduce to \eqref{classical-chern}.
\end{theorem}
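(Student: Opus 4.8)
The plan is to produce the functor $\ch_X$ from a single universal datum on the representing spaces and then verify that all structure is preserved. First I would reduce the problem, via strict naturality in $X$ together with an (enriched) Yoneda argument, to the choice of a \emph{universal cocycle} $z_n \in Z^n(E_n; V)$ whose class $[z_n] \in H^n(E_n; V)$ equals the classical Chern--Dold image $\ch(\iota_n)$ of the tautological class $\iota_n \in E^n(E_n)$. Granting such a $z_n$, the functor is forced on objects: for $f \colon X \to E_n$ put $\ch_X(f) := f^* z_n$, which is strictly natural since $(f\circ\alpha)^* z_n = \alpha^* f^* z_n$. On morphisms, a path $H \colon X \times [0,1] \to E_n$ from $f$ to $g$ should be sent to the primitive $P(H^* z_n) \in C^{n-1}(X;V)$ obtained from the prism (integration-along-$[0,1]$) operator $P$; since $z_n$ is a cocycle, $\delta P(H^* z_n) = g^* z_n - f^* z_n$, so this is a morphism $\ch_X(f) \to \ch_X(g)$ in $\scrZ^n(X)$. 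The standard properties of $P$ (naturality, behaviour under concatenation and under homotopies rel endpoints) would then show that this descends to the fundamental groupoid and is functorial, so that $\ch_X$ is a well-defined functor which, on isomorphism classes, sends $[f]$ to $f^*[z_n] = \ch(f)$ and hence recovers \eqref{classical-chern} by construction.

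The heart of the matter is the $2$-monoidal structure. The two monoidal products on $\Map(X,E_n)$ come from the two $H$-space multiplications $\mu_1,\mu_2 \colon E_n \times E_n \to E_n$ supplied by the identification $E_n \cong \Omega^2 E_{n+2}$, applied pointwise to maps out of $X$. Writing $\Delta$ for the diagonal, one computes $\ch_X(f \boxtimes_i g) = \Delta^*(f\times g)^* \mu_i^* z_n$, so the functor takes $\boxtimes_i$ to addition of cocycles precisely when $z_n$ is \emph{primitive} for $\mu_i$, i.e.\ $\mu_i^* z_n = z_n \times 1 + 1 \times z_n$ on $E_n \times E_n$. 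To arrange this simultaneously for $i=1,2$ I would not work with an arbitrary representative but pull $z_n$ back from a rigid model. Concretely, realise the Chern--Dold character as a map of spectra $E \to HV$ and model $HV$ by simplicial abelian groups (Dold--Kan), so that each $(HV)_n$ is a simplicial/topological abelian group carrying a \emph{tautological} cocycle $w_n \in Z^n((HV)_n; V)$ that is literally a homomorphism, hence strictly additive for the single group operation, which is the common value to which both loop multiplications reduce in this model. Setting $z_n := \phi_n^* w_n$ for the representing map $\phi_n \colon E_n \to (HV)_n$, the required primitivities become consequences of $\phi_n$ being a (double) $H$-map, which is available for the infinite-loop Chern--Dold map.

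I expect the main obstacle to be exactly this rigidification together with the \emph{interchange} coherence of a $2$-monoidal functor: making the chosen cocycles, prism primitives, and monoidal structure morphisms mutually compatible at the cochain level, coherently across all $n$ via the structure maps $\varepsilon_n$. Two points deserve care. First, if $\phi_n$ cannot be made a strict double $H$-map, the equalities $\mu_i^* z_n = z_n\times 1 + 1\times z_n$ must be weakened to a coherent system of primitives $b_i \in C^{n-1}(E_n\times E_n; V)$ serving as the structure morphisms of a \emph{strong} $2$-monoidal functor, and these must satisfy the associativity, unit, and interchange constraints modulo the identifications already built into $\scrZ^n(X)$. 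Second, the interchange axiom relating $\boxtimes_1$ and $\boxtimes_2$ is the cochain-level shadow of the Eckmann--Hilton argument; in the rigid abelian-group model it collapses to the trivial identity $a+b+c+d$ in $V$-valued cochains, which is ultimately what forces both products onto the single strict addition of $\scrZ^n(X)$. Once these coherences are in place, naturality in $X$ and compatibility in $n$ follow formally from the corresponding properties of $\phi_n$, $w_n$, and $P$, completing the construction.
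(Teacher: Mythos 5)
Your outline follows, in broad strokes, the same strategy as the paper: a universal cocycle on $E_n$ (the fundamental cocycles $\iota_n$, chosen compatibly with the structure maps), pulled back to define $\ch_X$ on objects; a prism/slant-product operator applied to $H^*\iota_n$ to define it on morphisms; and a rigid simplicial-abelian-group model of $HV$-cochains (the paper's cocycle spectrum \eqref{eq:cocycle-spectrum}) in which a strict addition is available. The genuine gap is at the step you dismiss as collapsing ``to the trivial identity.'' In a topological or simplicial abelian group $A$, the two loop concatenations $\varobar,\varominus$ on $\Omega^2 A$ are \emph{not} equal to the pointwise sum $+$; they are only homotopic to it, by Eckmann--Hilton reparametrizations. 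A $2$-monoidal functor $(\Pi_1\Omega^2 A,\varobar,\varominus)\to(\Pi_1\Omega^2 A,+,+)$ therefore carries nontrivial structure morphisms $f\varobar g\to f+g$ and $f\varominus g\to f+g$ (specific homotopy classes of reparametrizations), and the compatibility \eqref{axiom-2-mon-functor} between them is a genuine computation: one must verify that two different composite reparametrizations of a four-fold sum are homotopic rel endpoints. This is exactly the content of the paper's Lemma establishing \eqref{plus-varobar-varominus}, proved there by exhibiting explicit homotopies $\{\gamma,\phi\}_f$ and comparing the two composites via linear homotopies of paths in $[0,1]^2$. Your coherent primitives $b_i$ are precisely the slant products of these structure morphisms; asserting that they ``must satisfy'' the associativity, unit and interchange constraints restates what has to be proved rather than proving it, and passing to the rigid model only trivializes the \emph{target's} interchange, not the functor's coherence.

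A second, more formal omission: the identification of the relevant categories with $\Pi_1\Omega^2$ of the cocycle spectrum goes through the costructure maps \eqref{costructure-maps}, which are natural weak equivalences rather than isomorphisms, so transporting the $2$-monoidal structures across these zig-zags functorially in $X$ requires the doctrinal-adjunction machinery of Propositions~\ref{prop:equivalent-functors} and~\ref{prop:equivalent-stuff}. Likewise, realizing the Chern--Dold character as a map of spectra strictly compatible with the chosen homeomorphisms $\varepsilon_n^{\adj}$ is not automatic: it amounts to choosing fundamental cocycles with $\varepsilon_n^*\iota_{n+1}/[S^1]=\iota_n$, which the paper imports from Hopkins--Singer. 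These points are fixable by citation, but the Eckmann--Hilton coherence above is the heart of the theorem and is missing from your argument.
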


The notation is established in Section~\ref{2-monoidal} where we also review the theory of $2$-monoidal categories. Theorem~\ref{thm-refined-chern} is proven in Section~\ref{ssec:proof-ref-chern} after having explained in Section~\ref{sec:cocycle-spectrum} that the construction of \eqref{eq:chern-char} passes through an intermediate step which mediates between the algebraic and homotopical point of view.\smallskip

One motivation is the following application (Section~\ref{sec:application}): recall from \cite{Hopkins-Singer} that a differential $n$-cocycle on a manifold $M$ consists of a continuous map ${c\colon M\rightarrow E_n}$, a differential form $\omega \in \Omega^n(M;V)$, and a cochain $h\in C^{n-1}(M;V)$ satisfying 
\[
	\delta h = \omega - c^*\iota_n.
\]
(here, $\iota_n \in Z^n(E_n; V)$ denote fundamental cocycles, see Section~\ref{ssec:fund-cocycles}.) A differential cocycle on $M\times [0,1]$ is regarded as an equivalence between the two cocycles on the boundary. The Hopkins-Singer differential cohomology group $\hat{E}^n(M)$ is by definition the set of equivalences classes of differential $n$-cocycles on $M$.\smallskip

Differential cocycles can also be organized into a category $\hat{\scrE}^n(M)$. A consequence of Theorem~\ref{thm-refined-chern} is that these may be given $2$-monoidal structures. Our main application is the following (nearly equivalent) statement:

\begin{theorem}\label{mainappladd}
For every choice of fundamental cocycles there exist reduced cochains $A_n \in {C^{n-1}(E_n\times E_n; V)}$ satisfying coherence relations \textup(see Section~\textup{\ref{sec:application}}\textup) so that
\begin{equation}\label{add-dif-cocycles}
	(c_1, \omega_1, h_1) + (c_2, \omega_2, h_2) = (\alpha_n(c_1,c_2),\omega_1+\omega_2, h_1+h_2+(c_1,c_2)^*A_n)
\end{equation}
gives an abelian group structure on the Hopkins-Singer differential extension $\hat{E}$.
\end{theorem}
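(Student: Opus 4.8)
The plan is to deduce Theorem~\ref{mainappladd} from Theorem~\ref{thm-refined-chern} by extracting the combinatorial data of the refined Chern-Dold character and pushing it through the Hopkins-Singer construction. The key observation is that the $2$-monoidal functor $\ch_X$ carries the homotopical addition of maps into $E_n$ (realized by a chosen composition $\alpha_n\colon E_n\times E_n\to E_n$, i.e.\ the $H$-space structure on $E_n\simeq\Omega E_{n+1}$) to the \emph{strict} addition of singular cocycles. In particular, applied to the fundamental cocycle $\iota_n\in Z^n(E_n;V)$, the functoriality and monoidality of $\ch$ should produce, for the two evaluation maps $c_1,c_2\colon M\to E_n$, an identity of the form $\alpha_n^*\iota_n = \pr_1^*\iota_n + \pr_2^*\iota_n + \delta A_n$ in $C^n(E_n\times E_n;V)$ for a canonical reduced cochain $A_n\in C^{n-1}(E_n\times E_n;V)$. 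This $A_n$ is precisely the ``correction cochain'' measuring the failure of $\iota_n$ to be strictly additive under the $H$-space multiplication, and it is exactly the data appearing in~\eqref{add-dif-cocycles}.

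First I would pin down $\alpha_n$ and extract $A_n$: since $\ch$ takes loop composition to strict cochain addition, the $2$-monoidal naturality square for the universal object $\id_{E_n}\in\Map(E_n,E_n)$ forces the relation $\delta A_n=\alpha_n^*\iota_n-\pr_1^*\iota_n-\pr_2^*\iota_n$ up to the chosen homotopy encoded by $\ch$; reducedness of $A_n$ follows because all cochains and cocycles here are reduced for pointed spaces, and the basepoint of $E_n\times E_n$ is fixed by $\alpha_n$. Second, I would verify that~\eqref{add-dif-cocycles} is \emph{well-defined} on differential cocycles: given the defining constraint $\delta h_i=\omega_i-c_i^*\iota_n$, a direct computation shows that the proposed sum satisfies its own constraint, namely $\delta\bigl(h_1+h_2+(c_1,c_2)^*A_n\bigr)=(\omega_1+\omega_2)-\alpha_n(c_1,c_2)^*\iota_n$, using the $\delta A_n$ identity and $\alpha_n(c_1,c_2)=\alpha_n\circ(c_1,c_2)$. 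Third, I would check invariance under the equivalence relation defining $\hat E^n(M)$ by running the same addition on $M\times[0,1]$ and restricting to the boundary, which works because $A_n$ is a fixed cochain independent of $M$ and the construction is natural in $M$.

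The heart of the matter, and where the real work lies, is establishing the \textbf{abelian group axioms} --- associativity, commutativity, unit, and inverses --- \emph{at the level of differential cocycles modulo equivalence}, not merely up to higher coherence. Here is where the $2$-monoidal (rather than merely monoidal) structure is indispensable: the two loop compositions on $\Map(X,E_n)\cong\Map(X,\Omega^2 E_{n+2})$ and their interchange encode the homotopy-commutativity and homotopy-associativity of $\alpha_n$, and Theorem~\ref{thm-refined-chern} guarantees that $\ch$ transports these coherences to genuine relations among the correction cochains $A_n$. Concretely, associativity of~\eqref{add-dif-cocycles} requires an identity of the shape $A_n\circ(\alpha_n\times\id)+\text{(pullback of }A_n)=A_n\circ(\id\times\alpha_n)+\text{(pullback of }A_n)$ modulo $\delta$ of some reduced cochain, and commutativity requires $\tau^*A_n=A_n$ modulo $\delta$ (with $\tau$ the swap); these are the ``coherence relations'' the theorem alludes to. The main obstacle is that associativity and commutativity hold only up to coboundary, so one must show these coboundaries can be absorbed by a genuine differential-cocycle equivalence over $M\times[0,1]$ --- i.e.\ that the higher coherence data assembled by the $2$-monoidal functor $\ch$ descends to trivial automorphisms on equivalence classes.

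Finally, I would treat units and inverses: the zero cocycle $(\const,0,0)$ is a unit because $A_n$ restricts to zero on the basepoint slices $\{*\}\times E_n$ and $E_n\times\{*\}$ (by reducedness, the unit coherence of $\alpha_n$ transported through $\ch$), and inverses come from the homotopy-inverse of the $H$-space $E_n$ together with a correction cochain built analogously from $\ch$ applied to the antipode. I expect the algebraic axioms to reduce, after transport through $\ch$, to the standard coherence diagrams of a $2$-monoidal category evaluated on the fundamental cocycles, so the bulk of the proof is bookkeeping of reduced cochains and careful use of naturality in $M$ to promote homotopies to differential-cocycle equivalences.
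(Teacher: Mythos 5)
Your proposal follows essentially the same route as the paper: $A_n$ is extracted as the monoidal structure constraint $\ch^\varobar_{\pr_1,\pr_2}$ of the refined Chern--Dold character on the universal example $E_n\times E_n$, the coherence relations come from the braided monoidal functor axioms pushed through naturality, and the group axioms on $\hat E^n(M)$ are obtained by absorbing the residual homotopies ($a$, $s$, $r$, and the antipode nullhomotopy) and coboundaries via differential-cocycle equivalences over $M\times[0,1]$, exactly as in the paper's use of Proposition~\ref{additioncentral}. The only imprecision is that your stated commutativity and associativity relations omit the correction terms $\ch(s)$ and $\ch(a)$ (the slant products of $\iota_n$ along the braiding and associativity homotopies), but since you explicitly note that the change of underlying map by these homotopies must be compensated by a cocycle equivalence, this is a matter of bookkeeping rather than a gap.
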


In many cases it is important to have control of the algebraic structure at the level of differential cocycles. This is in sharp contrast to \cite{Hopkins-Singer}, where it is proven that the cohomology groups $\hat{E}^n(M)$ possess some abstract abelian group structure (they are identified as the homotopy groups of a spectrum whose structure maps are only abstractly chosen by a cofibrant replacement in a diagram model category, i.e.,~a choice of functorial sections): there is then no way of deciding which differential cocycle represents the sum.

\section{Theory of $2$-Monoidal Categories}\label{2-monoidal}
\noindent
The following is a special case of \cite[Section~5]{joyal1993braided} (or \cite[6.1]{MR2724388}) where the units coincide:

\begin{definition}
A \emph{$2$-monoidal category} $(\scrC, \varobar, \varominus, I, \zeta)$ is a category $\scrC$ with two monoidal structures $\varobar, \varominus$ sharing a unit $I$ and a natural `interchange' isomorphism
\begin{equation}\label{interchange}
	\zeta_{A, B, C, D}\colon (A\varominus B)\varobar (C\varominus D) \rightarrow (A\varobar C)\varominus (B \varobar D).
\end{equation}
We require $I \varobar I = I = I\varominus I$ which along with \eqref{interchange} shall endow both
\begin{equation}\label{compatiblity-zeta}
	\varobar\colon (\scrC \times \scrC, \varominus \times \varominus) \rightarrow (\scrC, \varominus),\quad
	\varominus\colon (\scrC \times \scrC, \varobar \times \varobar) \rightarrow (\scrC, \varobar)
\end{equation}
with the structure of monoidal functors.\smallskip

A \emph{$2$-monoidal functor} $F\colon \scrC \rightarrow \scrD$ has monoidal structures ${F^\varobar: (\scrC, \varobar) \rightarrow (\scrD, \varobar)}$, ${F^\varominus\colon (\scrC, \varominus) \rightarrow (\scrD, \varominus)}$ whose unit constraints ${F^{\varobar, \varominus}(I)=I}$ are the identity.
We require commutative diagrams for all objects $A,B,C,D$
\begin{equation}\label{axiom-2-mon-functor}
\begin{tikzpicture}[baseline=(current  bounding  box.center)]
\matrix (m) [matrix of math nodes, column sep=1.5em, row sep=2em]
{
	F \left( (A \varominus B) \varobar (C\varominus D) \right)&
	F\left(  (A\varobar C) \varominus (B\varobar D) \right)\\
	F(A\varominus B)\varobar F(C\varominus D)&
	F(A\varobar C)\varominus F(B\varobar D)\\
	(FA\varominus FB)\varobar (FC\varominus FD)&
	(FA \varobar FC) \varominus (FB\varobar FD).\\
};
\path[->, font=\scriptsize] (m-1-1) edge node [above] {$F(\zeta^\scrC)$} (m-1-2);
\path[->, font=\scriptsize] (m-3-1) edge node [below] {$\zeta^\scrD$} (m-3-2);

\path[->, font=\scriptsize] (m-1-1) edge node [left] {$F^\varobar_{A\varominus B, C\varominus D}$} (m-2-1);
\path[->, font=\scriptsize] (m-2-1) edge node [left] {$F^\varominus_{A,B} \varobar F^\varominus_{C,D}$} (m-3-1);

\path[->, font=\scriptsize] (m-1-2) edge node [right] {$F^\varominus_{A\varobar C, B\varobar D}$} (m-2-2);
\path[->, font=\scriptsize] (m-2-2) edge node [right] {$F^\varobar_{A,C} \varominus F^\varobar_{B,D}$} (m-3-2);
\end{tikzpicture}
\end{equation}
\end{definition}

Restricting to small categories $\scrC$, these definitions give a category $\TwoMon$. We call $F$ an \emph{equivalence} if it is an equivalence of the underlying categories.\smallskip

We assume familiarity with monoidal categories as presented in \cite{leinster2004higher}.
Equality of $2$-monoidal functors $F=G$ means that both monoidal structures ${F^\varobar = G^\varobar}$, ${F^\varominus = G^\varominus}$ agree.
$F$ is called \emph{strict} if both $F^\varobar, F^\varominus$ are strict. Restricting to such functors gives the subcategory $\TwoMon_\mathrm{strict}$.
Denote by $\Cat$ ($\MonCat$) the category of small (monoidal) categories.\medskip

We think of a functor $\scrC\colon \scrI \rightarrow \Cat$ as a \emph{\textup(natural\textup) family of categories}. By a \emph{$2$-monoidal structure} on $\scrC$ we mean a lift to a functor $\hat\scrC\colon{\scrI \rightarrow \TwoMon}$: for every $X\in \scrI$ we have $2$-monoidal categories $\scrC_X$ and every morphism $f\in \scrI(X,Y)$ gives a $2$-monoidal functor $\scrC(f)\colon \scrC_X \rightarrow \scrC_Y$. A natural transformation ${F\colon \scrC\Rightarrow \scrD}$ between two families $\scrC,\scrD\colon {\scrI\rightarrow\TwoMon}$ may be viewed as a \emph{\textup(natural\textup) family of $2$-monoidal functors}: for every $X\in \scrI$ we have a $2$-monoidal functor $F_X\colon \scrC_X \rightarrow \scrD_X$.
Naturality means that for $f\colon X\rightarrow Y$ in $\scrI$ we get a commutative diagram in $\TwoMon$:
\begin{equation}\label{eqn:naturality-gen}
\begin{aligned}
\xymatrix{
\scrC_X\ar[d]_{\scrC(f)}\ar[r]^{F_X}&\scrD_X\ar[d]^{\scrD(f)}\\
\scrC_Y\ar[r]_{F_Y}&\scrD_Y
}
\end{aligned}
\end{equation}

The motivating example for the theory of $2$-monoidal categories is the following:

\begin{example}\label{example-1}
The fundamental groupoid $\Pi_1 \Omega^2 X$ of the double loop space of a space $X\in \Top^*$ has monoidal structures by vertical and horizontal composition:
\begin{align*}
	f\varobar g(s,t) &= \begin{cases}
		f(2s,t),&s\leq 1/2,\\
		g(2s-1,t),&s\geq 1/2,
	\end{cases}
	&f\varominus g(s,t) &= \begin{cases}
		f(s,2t),&t\leq 1/2,\\
		g(s,2t-1),&t\geq 1/2.
	\end{cases}
\end{align*}
The interchange $\zeta$ is the identity and the common unit $I$ is the base-point map. Since maps $X\rightarrow Y$ preserve $\varobar, \varominus, I$ we get a functor ${\Pi_1\Omega^2\colon \Top^* \rightarrow \TwoMon_\mathrm{strict}}$.
\end{example}

There is a \emph{unique} way to transport a $2$-monoidal structure along an isomorphism $F\colon\scrC\rightarrow \scrD$ of categories, turning $F$ into a strict $2$-monoidal functor (so $X\varobar Y = F^{-1}(FX\varobar FY)$ and $F\zeta^\scrC = \zeta^\scrD$).
As usual, uniqueness implies functoriality: any functor ${\scrC\colon\scrI \rightarrow \Cat}$ naturally isomorphic to ${\scrD\colon\scrI \rightarrow \TwoMon}$ may be uniquely lifted along the forgetful functor to ${\hat\scrC\colon \scrI\rightarrow \TwoMon}$, making every component ${\hat\scrC_X \rightarrow \scrD_X}$ of the transformation a strict $2$-monoidal functor.

\begin{definition}\label{def:explanation}
For $X\in\Top^*$, let $\Map(X,E_n) = \Pi_1 E_n^X$ be the fundamental groupoid of the pointed mapping space (pointed maps ${X\rightarrow E_n}$ and homotopies).
The structure maps induce a natural family of isomorphisms $\Map(X,E_n) \cong \Pi_1\Omega^2 E_{n+2}^X$ to $2$-monoidal categories from Example~\ref{example-1}. Transporting, we get
\[
	\Map(-,E_n)\colon \Top^* \rightarrow \TwoMon.
\]
(Recall that $(E_n, \varepsilon_n)$ is a spectrum representing the cohomology theory $E$.)
\end{definition}


\begin{examples}\label{examples-2}
\noindent
\textbf{(i)} Let $A$ be a topological or simplicial abelian group. On $\Pi_1 A$ we take $\varobar=\varominus=+$, $I=0$, and the identity as the interchange. Since $+$ and $0$ are preserved by group homomorphisms, we get a functor $\Pi_1\colon \sAb \rightarrow \TwoMon_\mathrm{strict}$.\vskip1ex

\noindent
\textbf{(ii)} Any monoidal category $(\scrC, \otimes)$ may be regarded as being $2$-monoidal by taking $\varobar = \varominus = \otimes$ and $\zeta = \id$. This yields a functor $\MonCat \rightarrow \TwoMon$.\vskip1ex

\noindent
\textbf{(iii)} For every cochain complex $(C^*, d)$ and $n\in \Z$, define a strict monoidal category $\scrZ^n$ of $n$-cocycles: objects are $x \in C^n$ with $dx=0$. A morphism $x\rightarrow y$ consists of an $\im(d)$-coset of elements $u\in C^{n-1}$ with $du = x - y$. Composition and the monoidal structure are both given by addition. Combined with (ii) we obtain for each $n\in \Z$ a functor $\scrZ^n\colon\Ch \rightarrow \TwoMon_\mathrm{strict}$ on cochain complexes.
\end{examples}

Recall ${C^*(X;V) = \prod_{i+j=*} C^i(X;V^j)}$ for a graded vector space $V^*$. Being fixed in our discussion as ${V=E^*(S^0)\otimes_\Z \R}$, we will simply write $C^*(X)=C^*(X;V)$. Similarly, we shall write $Z^*(X) = Z^*(X;V)$ for the singular cocycles of $X$ with coefficients in $V$. For chains groups $C_*(X)$, this convention is \emph{not} adopted.

\begin{definition}
The reduced cochain complex $C^*(-;V)$ from $\Top^*$ to $\Ch$ composed with (iii) gives the functor $\scrZ^n\colon \Top^* \rightarrow \TwoMon_\mathrm{strict}$.
\end{definition}

The proofs of the following two propositions are given in Appendix \ref{appendix}. 

\begin{proposition}\label{prop:equivalent-functors}
Let $\scrC, \scrD, \scrE$ be $2$-monoidal categories and suppose $F\colon \scrC \rightarrow \scrD$, $G\colon \scrD\rightarrow \scrE$ are \textup(ordinary\textup) functors of the underlying categories. Let $H=G\circ F$.
\begin{enumerate}
\item
If $F$ is an equivalence and $F, H$ have $2$-monoidal structures, then $G$ has a unique $2$-monoidal structure such that $H=G\circ F$ as $2$-monoidal functors.
\item
If $G$ is an equivalence and $G, H$ have $2$-monoidal structures, then $F$ has a unique $2$-monoidal structure such that $H=G\circ F$ as $2$-monoidal functors.
\end{enumerate}
\end{proposition}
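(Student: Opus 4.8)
The plan is to treat the two monoidal structures $\varobar,\varominus$ symmetrically and to reduce every coherence verification, through the equivalence, to an axiom that already holds for the $2$-monoidal functor $H$. I begin by recording what the equation $H=G\circ F$ forces on structure maps. Following the paper's convention, a monoidal structure is a natural family of isomorphisms $F^\varobar_{P,Q}\colon F(P\varobar Q)\to FP\varobar FQ$, and unwinding the composite $G\circ F$ shows that, for each of $\varobar$ and $\varominus$, the identity $H=G\circ F$ reads
\[
	H^\varobar_{A,B}=G^\varobar_{FA,FB}\circ G(F^\varobar_{A,B}),\qquad H^\varominus_{A,B}=G^\varominus_{FA,FB}\circ G(F^\varominus_{A,B})
\]
for all objects $A,B$ of $\scrC$. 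Since $F$ is strong monoidal, $F^\varobar_{A,B}$ and $F^\varominus_{A,B}$ are isomorphisms, hence so are their images under $G$; thus each equation may be solved for exactly one of its unknown terms.

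For part~(i) I solve for the structure maps of $G$ on the image of $F$, setting $G^\varobar_{FA,FB}=H^\varobar_{A,B}\circ G(F^\varobar_{A,B})^{-1}$ and similarly for $\varominus$. Because $G^\varobar$ must be natural and $F$ is essentially surjective, the value on an arbitrary pair $X,Y$ is then forced: choosing isomorphisms $\phi\colon X\to FA$ and $\psi\colon Y\to FB$, naturality gives $G^\varobar_{X,Y}=(G\phi\varobar G\psi)^{-1}\circ G^\varobar_{FA,FB}\circ G(\phi\varobar\psi)$. This already establishes uniqueness. For existence I take these formulas as definitions; independence of the choices $A,\phi$ follows from full faithfulness of $F$, writing any two choices as differing by $F(a)$ for a unique isomorphism $a$ and invoking naturality of $H^\varobar$ and $F^\varobar$. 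Equivalently, one may fix a quasi-inverse $K$ with counit a natural isomorphism $\epsilon\colon FK\Rightarrow\id_\scrD$, equip $K$ with its induced structure as quasi-inverse to the monoidal equivalence $F$, transport the structure of $HK$ along the natural isomorphism $G\epsilon\colon HK\Rightarrow G$, and appeal to the uniqueness just proved to see the result is independent of $K$.

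Part~(ii) is more direct. Solving the same two equations for the structure maps of $F$ forces $G(F^\varobar_{A,B})=(G^\varobar_{FA,FB})^{-1}\circ H^\varobar_{A,B}$, and likewise for $\varominus$. Since $G$ is an equivalence, it is fully faithful, so there is a \emph{unique} morphism $F^\varobar_{A,B}\colon F(A\varobar B)\to FA\varobar FB$ whose image under $G$ is the prescribed composite, and dually for $F^\varominus_{A,B}$. Uniqueness of the $2$-monoidal structure on $F$ is immediate from faithfulness of $G$.

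It remains to check that the maps so defined satisfy the monoidal associativity and unit constraints, the $2$-monoidal compatibility axiom~\eqref{axiom-2-mon-functor}, and the normalizations $F^{\varobar,\varominus}(I)=\id$ (the latter following from the corresponding normalizations for $F$ and $H$ together with $I\varobar I=I=I\varominus I$). In case~(ii) this is clean: $G$ is faithful, so any coherence diagram for $F$ commutes once its image under $G$ does, and that image, after substituting the defining formulas and $H=G\circ F$, is precisely the corresponding diagram for the $2$-monoidal functor $H$, valid by hypothesis (using naturality of $G$ and of the interchange isomorphisms). In case~(i) the same reduction applies on objects of the form $FA$, where the diagrams collapse to the axioms for $H$, and one propagates to arbitrary objects by naturality and essential surjectivity. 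The main obstacle is bookkeeping rather than conceptual: the interchange square~\eqref{axiom-2-mon-functor} involves all four structure maps $F^\varobar,F^\varominus$ (respectively $G^\varobar,G^\varominus$) at once, so the substitution of the defining formulas must be arranged so that this square for the unknown functor becomes exactly the established square for $H$. These diagram chases are routine but lengthy, which is why they are deferred to the appendix.
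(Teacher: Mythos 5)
Your proposal is correct and follows essentially the same strategy as the paper: uniqueness by showing the structure maps are forced on the image of $F$ and then on all objects via essential surjectivity and naturality, existence by transporting the structure of $H$ across the equivalence, and verification of the interchange axiom \eqref{axiom-2-mon-functor} first on objects $FA_i$ (where it collapses to the axiom for $H$) and then in general by naturality of $\zeta$. The only cosmetic differences are that the paper packages the existence step through doctrinal adjunction (Lemma~\ref{prop:lift-of-mon-functors-section}), which you mention as an alternative, and that you prove (ii) directly via faithfulness of $G$ rather than by the dual argument.
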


\begin{proposition}\label{prop:equivalent-stuff}
Let ${\scrC\colon \scrI \rightarrow \Cat}$, ${\scrD\colon \scrI \rightarrow \TwoMon}$ be functors. Suppose $F\colon \scrC \Rightarrow \scrD$ is a nat.~transformation of $\Cat$-valued functors whose components are equivalences
\[
	F_X\colon \scrC_X \xrightarrow{\sim} \scrD_X,\quad X\in\scrI.
\]
Then we may lift $\scrC$ to $\hat{\scrC}\colon\scrI\rightarrow\TwoMon$ and promote $F$ to a natural family of $2$-monoidal equivalences $\hat{F}_X\colon \hat{\scrC}_X \rightarrow \scrD_X$ \textup(i.e.,~a natural transformation $F\colon \hat{\scrC} \Rightarrow \scrD$\textup). 
\end{proposition}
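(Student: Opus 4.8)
The plan is to build the lift $\hat\scrC$ objectwise, by transporting the $2$-monoidal structure of each $\scrD_X$ across the equivalence $F_X$, and then to force functoriality on the morphisms of $\scrI$ using the \emph{uniqueness} clause of Proposition~\ref{prop:equivalent-functors}. The argument divides cleanly into an existence step (defining $\hat\scrC_X$ on objects of $\scrI$) and an essentially formal step (defining $\hat\scrC(f)$ on morphisms and checking functoriality and naturality).

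\emph{Existence.} Fix $X\in\scrI$ and choose adjoint equivalence data $(G_X,\eta_X,\epsilon_X)$ for $F_X$, with natural isomorphisms $\eta_X\colon\id\Rightarrow G_XF_X$ and $\epsilon_X\colon F_XG_X\Rightarrow\id$. I would define the two tensor products on $\scrC_X$ by conjugation, $A\varobar B:=G_X(F_XA\varobar F_XB)$ and likewise for $\varominus$, transport the associators, unitors and the interchange $\zeta$ of $\scrD_X$ through the same recipe, and equip $F_X$ with the (invertible) structure maps assembled from $\epsilon_X$. All coherence diagrams — the pentagon and triangle for each structure together with the compatibility \eqref{compatiblity-zeta} of the interchange \eqref{interchange} — then reduce to the corresponding diagrams in $\scrD_X$, since every transported constraint is a whisker of a constraint of $\scrD_X$ with the fixed isomorphisms $\eta_X,\epsilon_X$. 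This produces a $2$-monoidal category $\hat\scrC_X$ and a $2$-monoidal functor $\hat F_X\colon\hat\scrC_X\to\scrD_X$; as its underlying functor $F_X$ is an equivalence, $\hat F_X$ is a $2$-monoidal equivalence in the sense of the paper.

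\emph{Functoriality and naturality.} For $f\colon X\to Y$ the naturality of $F$ as a $\Cat$-valued transformation gives the strict identity $\scrD(f)\circ F_X=F_Y\circ\scrC(f)$ of underlying functors. The composite $\scrD(f)\circ\hat F_X\colon\hat\scrC_X\to\scrD_Y$ is $2$-monoidal, and as an underlying functor it factors through the $2$-monoidal equivalence $\hat F_Y$ as $\hat F_Y\circ\scrC(f)$. Applying Proposition~\ref{prop:equivalent-functors}(ii) with $G=\hat F_Y$ and $H=\scrD(f)\circ\hat F_X$ then endows the underlying functor $\scrC(f)$ with a unique $2$-monoidal structure $\hat\scrC(f)$ satisfying $\hat F_Y\circ\hat\scrC(f)=\scrD(f)\circ\hat F_X$ in $\TwoMon$, which is precisely the commutativity of \eqref{eqn:naturality-gen} in $\TwoMon$. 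That $\hat\scrC$ is a functor — $\hat\scrC(\id_X)=\id$ and $\hat\scrC(g\circ f)=\hat\scrC(g)\circ\hat\scrC(f)$ as $2$-monoidal functors — follows purely from this uniqueness: in each case the two candidate structures on the common underlying functor become equal after post-composition with the equivalence $\hat F_Z$ (using the squares already established for $f$ and $g$), hence coincide. In particular the adjoint data may be chosen independently at each object, since compatibility across $\scrI$ is never needed.

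\emph{Main obstacle.} The formal step is automatic once Proposition~\ref{prop:equivalent-functors} is in hand, so the genuine work sits in the existence step, and there the delicate point is the paper's strictness conventions: a $2$-monoidal category is required to satisfy $I\varobar I=I=I\varominus I$ on the nose, and a $2$-monoidal functor to preserve units by the identity. Naive conjugation only produces these up to the isomorphisms $\eta_X,\epsilon_X$, so I expect the main effort to be the normalization of the transported unit — replacing the conjugated products on unit-involving arguments by the strict ones and rechecking the affected coherences — so that $\hat\scrC_X$ and $\hat F_X$ literally meet the definitions. Everything else transports mechanically.
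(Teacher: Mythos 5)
Your proposal follows the paper's proof essentially verbatim: transport the $2$-monoidal structure of each $\scrD_X$ across a chosen adjoint equivalence objectwise (exactly the recipe $C_1\otimes C_2 = G(FC_1\otimes FC_2)$ from the appendix), then use the uniqueness clause of Proposition~\ref{prop:equivalent-functors} to endow each $\scrC(f)$ with the unique compatible $2$-monoidal structure, with functoriality of $\hat{\scrC}$ following from that same uniqueness. The strict-unit normalization you flag as the main remaining obstacle is a genuine subtlety, but the paper's own proof does not address it either, so your treatment is at least as careful as the original.
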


Of course, this also holds in the dual situation $\scrC\colon \scrI \rightarrow \TwoMon$, $\scrD\colon \scrI\rightarrow \Cat$. We emphasize that the lift $\hat{\scrC}$ is \emph{not unique}, but can still be \emph{chosen} functorially.

%

\begin{example}\label{ex:groupoid-two-kan}
On the category $\Kan^*$ of pointed Kan complexes consider
\[
	\scrC\colon\Kan^* \xlongrightarrow{\Pi_1 \Omega^2} \Cat,\quad
	\scrD\colon \Kan^* \xlongrightarrow{|\cdot|} \Top^*
	\xlongrightarrow{\Pi_1\Omega^2} \TwoMon.
\]
($F_X$ is induced by geometric realization of points and paths in $X$.) Hence the fundamental groupoids $\Pi_1 \Omega^2 K$ for pointed Kan complexes (see~\cite{Goerss-Jardine}) can be given \emph{functorial} $2$-monoidal structures $\Pi_1\Omega^2\colon \Kan^* \rightarrow \TwoMon$.
\end{example}

One of the main results of \cite[Section~5]{joyal1993braided} (or see \cite[Proposition~6.11]{MR2724388}) is that there is an equivalence from $\TwoMon$ to braided monoidal categories:

\begin{theorem}\label{prop:joyal-street}
From a $2$-monoidal structure $\scrC$ one can construct braidings on $(\scrC, \varobar)$ and $(\scrC,\varominus)$. The identity functor may be viewed as a braided monoidal functor ${e\colon(\scrC, \varobar) \rightarrow (\scrC, \varominus)}$ with unit constraint $e_I = \id$ and structure maps
\begin{equation}\label{monstruct-mutdistrib}
	e_{A,B}^{\varobar, \varominus}: A\varobar B \xleftarrow{\;\rho^\varominus\varobar \lambda^\varominus} (A\varominus I)\varobar (I\varominus B) \cong (A\varobar I)\varominus (I\varobar B) \xrightarrow {\rho^\varobar\varominus\lambda^\varobar} A\varominus B.
\end{equation}
\textup(here $\lambda^\varobar, \rho^\varobar$ are the unit constraints on $(\scrC,\varobar)$ and similarly for $\varominus$.\textup)
\end{theorem}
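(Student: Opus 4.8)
The argument is the categorified Eckmann--Hilton principle: the two monoidal structures share a unit, so evaluating the interchange $\zeta$ at $I$ and contracting with the unit constraints forces $(\scrC,\varobar)$ and $(\scrC,\varominus)$ to become braided and essentially to coincide. First I would record the two basic comparison morphisms obtained from $\zeta$ by inserting the shared unit. One is the composite \eqref{monstruct-mutdistrib} of the statement, namely
\[
	e_{A,B}\colon A\varobar B \xrightarrow{\ \sim\ } A\varominus B,
\]
built from $\zeta_{A,I,I,B}$ together with the unit constraints $\rho^\varominus,\lambda^\varominus,\rho^\varobar,\lambda^\varobar$; the other, built in the same way from $\zeta_{I,A,B,I}$, is
\[
	f_{A,B}\colon A\varobar B \xrightarrow{\ \sim\ } B\varominus A.
\]
Both are natural in $A$ and $B$ because $\zeta$ and the unit constraints are natural, and both are isomorphisms because $\zeta$ and the unit constraints are invertible.

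Next I would define the braidings as composites of $e$ and $f$:
\[
	\beta^\varobar_{A,B}=e_{B,A}^{-1}\circ f_{A,B}\colon A\varobar B\to B\varobar A,\qquad
	\beta^\varominus_{A,B}=f_{A,B}\circ e_{A,B}^{-1}\colon A\varominus B\to B\varominus A.
\]
Naturality and invertibility are inherited from those of $e$ and $f$. With these definitions the braided-functor compatibility for $e$ is automatic: both $e_{B,A}\circ\beta^\varobar_{A,B}$ and $\beta^\varominus_{A,B}\circ e_{A,B}$ collapse to $f_{A,B}$, so the two sides of the braided square agree.

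The substantive point, and the step I expect to be the main obstacle, is the verification of the two hexagon axioms for $\beta^\varobar$ (and dually for $\beta^\varominus$). The input here is precisely the coherence packaged in \eqref{compatiblity-zeta}: demanding that $\varobar\colon(\scrC\times\scrC,\varominus\times\varominus)\to(\scrC,\varominus)$ be a monoidal functor amounts to the associativity and unit coherence diagrams for $\zeta$, and these are exactly what make the categorified commutativity $f_{A,B}$ compatible with the associators, yielding the hexagons. Concretely I would expand each hexagon for, say, $\beta^\varobar_{A,B\varobar C}$ into a diagram assembled from three instances of $\zeta$, the associators of $\varobar$ and $\varominus$, and the unit constraints, and then read off commutativity from the coherence axioms for $\zeta$ together with the identities $I\varobar I=I=I\varominus I$; the unit condition is what makes the two ways of inserting $I$ into $A\varobar B$ agree. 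The diagram chase, though lengthy, is forced at each stage, and is the content of \cite[Section~5]{joyal1993braided}.

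Finally, that $e$ is a (strong, since it is built from isomorphisms) monoidal functor $(\scrC,\varobar)\to(\scrC,\varominus)$ with $e_I=\id$ --- compatibility of its structure maps with associators and units --- follows from the same coherence for $\zeta$, as $e$ is itself assembled from $\zeta$ and the unit constraints. Combined with the braided compatibility checked above, this exhibits $e$ as the asserted braided monoidal functor and completes the construction.
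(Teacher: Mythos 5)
Your outline is correct, but note that the paper does not prove this statement at all: it is quoted directly from \cite[Section~5]{joyal1993braided} (equivalently \cite[Proposition~6.11]{MR2724388}), and your construction --- $e_{A,B}$ from $\zeta_{A,I,I,B}$, the auxiliary $f_{A,B}\colon A\varobar B\to B\varominus A$ from $\zeta_{I,A,B,I}$, the braidings as composites $e^{-1}\circ f$ and $f\circ e^{-1}$, and the hexagons extracted from the coherence of $\zeta$ in \eqref{compatiblity-zeta} --- is exactly the standard Eckmann--Hilton argument carried out there. Since you, like the paper, ultimately defer the hexagon diagram chase to that same source, this is essentially the same approach and there is nothing further to reconcile.
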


The double loop space $\Omega^2 A$ of a topological abelian group $A$ (base-point $0$) is again an abelian group. Example~\ref{example-1} and Example~\ref{examples-2}~(i) give two different ways of viewing the fundamental groupoid $\Pi_1 \Omega^2 A$ as a $2$-monoidal category.

\begin{lemma}
For every topological abelian group $A$, the identity functor may be endowed canonically with the structure of a $2$-monoidal functor:
\begin{equation}\label{plus-varobar-varominus}
	(\Pi_1 \Omega^2 A, \varobar, \varominus,\const_0, \id) \longrightarrow (\Pi_1 \Omega^2 A, +, +,0, \id).
\end{equation}
\end{lemma}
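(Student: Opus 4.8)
The two $2$-monoidal structures live on the \emph{same} groupoid $\Pi_1\Omega^2 A$ and share a unit: the base point of $\Omega^2 A$ is the constant double loop at $0\in A$, so $\const_0 = 0$ and the required unit constraints are honest identities. Both interchanges are trivial: for the source this is Example~\ref{example-1}, and for the target $(+,+)$ it is the equality $(a+b)+(c+d)=(a+c)+(b+d)$ in the abelian group $\Omega^2 A$. Writing a double loop as $f\colon([0,1]^2,\partial)\to(A,0)$, the source operations $\varobar,\varominus$ are concatenation in the $s$- resp.\ $t$-variable while the target operation is pointwise addition. Since the groupoid makes every structure map invertible, it suffices to produce the oplax data
\[
  F^\varobar_{f,g}\colon f\varobar g \longrightarrow f+g,\qquad
  F^\varominus_{f,g}\colon f\varominus g \longrightarrow f+g
\]
appearing in~\eqref{axiom-2-mon-functor} together with its coherences; this is the Eckmann--Hilton comparison of concatenation with the pointwise group law.

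I would write the comparison down explicitly. For $u\in[0,1]$ set
\[
  \phi_u(x)=\min\!\Big(\tfrac{2x}{1+u},\,1\Big),\qquad
  \psi_u(x)=\max\!\Big(\tfrac{2x-1+u}{1+u},\,0\Big),
\]
monotone reparametrizations of $[0,1]$ fixing the endpoints, running from the concatenation reparametrizations at $u=0$ to the identity at $u=1$. Then the homotopy class of the path
\[
  u\longmapsto\big[(s,t)\mapsto f(\phi_u(s),t)+g(\psi_u(s),t)\big]
\]
defines $F^\varobar_{f,g}$: at $u=0$ the two summands restrict to $f\varobar g$ (using $f(1,t)=g(0,t)=0$), and at $u=1$ one gets $f+g$. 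Interchanging $s$ and $t$ defines $F^\varominus_{f,g}$. Continuity of these formulas in $f,g$ yields naturality in both variables at once.

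For the monoidal coherences of $F^\varobar$ and $F^\varominus$ I would invoke a single principle. Every cell in sight is an equality of two paths in $\Omega^2 A$ with common endpoints, each of the form ``precompose $f,g,h,\dots$ with a continuously varying tuple of monotone, endpoint-fixing reparametrizations of the square, then add in $A$.'' Because such reparametrizations form a convex, hence contractible, space, any two families with matching endpoints are homotopic rel endpoints by linearly interpolating the reparametrizations, and continuity of addition upgrades this to a homotopy of paths. This settles the associativity and unit axioms that make $(\id,F^\varobar)$ and $(\id,F^\varominus)$ monoidal functors into $(\Pi_1\Omega^2 A,+)$, whose own associators and unitors are identities.

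The heart of the matter is the interchange axiom~\eqref{axiom-2-mon-functor}. With $F=\id$ and both interchanges trivial it asks that on $(A\varominus B)\varobar(C\varominus D)$ the route ``slide in $s$, then in $t$'' equal the route ``slide in $t$, then in $s$'' as a morphism to $A+B+C+D$. Here a pleasant simplification occurs: concatenation and pointwise addition satisfy a \emph{strict} interchange, e.g.\ $(A\varominus B)+(C\varominus D)=(A+C)\varominus(B+D)$, so the intermediate objects of the two routes coincide on the nose. I would then fill the square directly by the bi-reparametrization homotopy
\[
  H(u,v)=(s,t)\mapsto A(\phi_u(s),\phi_v(t))+B(\phi_u(s),\psi_v(t))+C(\psi_u(s),\phi_v(t))+D(\psi_u(s),\psi_v(t)),
\]
whose corners are the fourfold concatenation $(u=v=0)$ and the total sum $(u=v=1)$, and whose four edges are \emph{exactly} the four structure maps of~\eqref{axiom-2-mon-functor}: the $v=0$ edge is $F^\varobar_{A\varominus B,\,C\varominus D}$, the $u=1$ edge is $F^\varominus_{A,B}\varobar F^\varominus_{C,D}$, the $u=0$ edge is $F^\varominus_{A\varobar C,\,B\varobar D}$, and the $v=1$ edge is $F^\varobar_{A,C}\varominus F^\varobar_{B,D}$. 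The independence of the $s$- and $t$-reparametrizations is what makes $H$ well defined into $\Omega^2 A$, and this is precisely the Eckmann--Hilton argument. I expect this compatibility of the two slides to be the only genuine obstacle; all remaining coherences dissolve into the contractibility principle above.
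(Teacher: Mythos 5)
Your construction is correct, and in fact your structure maps coincide with the paper's: writing $\alpha(u)=(1+u)/2$, $\beta(u)=(1-u)/2$, your path $u\mapsto f(\phi_u(s),t)+g(\psi_u(s),t)$ is exactly the paper's $\{(0,0),(\alpha,1)\}_f+\{(\beta,0),(1,1)\}_g$, where $\{\gamma,\phi\}_f$ denotes the homotopy squeezing $f$ (extended by zero) into the rectangle bounded by $\gamma$ and $\phi$. The difference is organizational. The paper does not build the two monoidal structures on the identity functor by hand: it observes that $\varobar$ and $+$ (resp.\ $\varominus$ and $+$) are mutually distributive, so that $(\Pi_1\Omega^2A,\varobar,+,\id)$ is itself $2$-monoidal, and then invokes Theorem~\ref{prop:joyal-street} to obtain $e^{\varobar,+}$ and $e^{\varominus,+}$ together with all monoidal-functor coherences for free; the only thing left to check is \eqref{axiom-2-mon-functor}, which it does by expanding both composites in the $\{\gamma,\phi\}$ notation and noting that any two bounding paths in $[0,1]^2$ are linearly homotopic. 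You instead verify the monoidal coherences directly via your convexity-of-reparametrizations principle (the same linear-interpolation trick) and settle \eqref{axiom-2-mon-functor} with the single bi-reparametrization square $H(u,v)$, whose four edges you correctly match to the four arrows of the diagram; this square-filling is arguably cleaner than the paper's edge-by-edge comparison. The price of bypassing Joyal--Street is that the associativity, unit and naturality checks become your responsibility; your sketch of these is adequate, but to apply the convexity principle to the \emph{composite} arrows in those diagrams you should note explicitly that a concatenation of reparametrization-and-add paths is again such a path after rescaling the parameter. One small misstatement: the intermediate objects of the two routes in \eqref{axiom-2-mon-functor} are $(A\varominus B)+(C\varominus D)$ and $(A\varobar C)+(B\varobar D)$, which do \emph{not} coincide; only the common source $(A\varominus B)\varobar(C\varominus D)=(A\varobar C)\varominus(B\varobar D)$ and the common target $(A+B)+(C+D)=(A+C)+(B+D)$ do. This is harmless, since your homotopy $H$ fills the square regardless.
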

\begin{proof}
Since the operations $\varobar$ and $+$ are mutually distributive, $(\Pi_1\Omega^2 A, \varobar, +, \id)$ defines a $2$-monoidal category and Theorem~\ref{prop:joyal-street} gives a canonical monoidal structure $e_{f,g}^{\varobar, +}$ on the identity functor. Similarly, we get a monoidal structure $e_{f,g}^{\varominus,+}$. It remains to show the commutativity of \eqref{axiom-2-mon-functor}. Suppose $\gamma, \phi \colon [0,1]\rightarrow [0,1]^2$ satisfy $\gamma\leq \phi$ componentwise. For $f \in \Pi_1\Omega^2 A$ define a homotopy that places $f$ into the rectangles bounded by $\gamma$ and $\phi$. Viewing $f$ as a map ${[0,1]^2 \rightarrow A}$ taking the boundary to zero and extended to the plane by zero, we may write
\[
	\{\gamma,\phi\}_f(t,x,y) = f\left(  \frac{x-\gamma_1(t)}{\phi_1(t)-\gamma_1(t)}, \frac{y-\gamma_2(t)}{\phi_2(t)-\gamma_2(t)} \right).
\]
A path homotopy $\gamma^s\leq \phi^s$ (parameter $s$) gives a homotopy $\{\gamma^s, \phi^s\}_f$ of  homotopies.

For paths $u, v\colon [0,1]\rightarrow X$ with $u(1)=v(0)$ let $u\star v$ denote `$u$ followed by $v$'.
Write $\alpha(t)=(1+t)/2$, $\beta(t)=(1-t)/2$ and $c(t)=c$ for fixed $c\in [0,1]$. Equation \eqref{monstruct-mutdistrib} defines $e_{f,g}^{\varobar,+}$ as ${\left\{ (0,0),(\alpha,1) \right\}_f + \left\{ (\beta,0), (1,1) \right\}_g}$. Similarly, $e_{f,g}^{\varominus,+}= {\left\{ (0,0), (1,\alpha)\right\}_f + \left\{(0,\beta),(1,1)\right\}_g}$. Performing the composition,
\begingroup\setlength{\medmuskip}{1mu}
\begin{align*}
{(e_{f,g}^{\varobar,+} + e_{h,j}^{\varobar,+}) e_{f\varominus g, h\varominus j}^{\varobar,+}}=
&\big\{
(0,0),
(\alpha,1/2)\star(1,\alpha)
\big\}_f
+
\big\{
(0,1/2)\star(0,\beta),
(\alpha,1)\star(1,1)
\big\}_g\\
+&
\big\{
(\beta,0)\star(0,0),
(1,1/2)\star(1,\alpha)
\big\}_h
+
\big\{
(\beta,1/2)\star(0,\beta),
(1,1)
\big\}_j,\\
{(e_{f,h}^{\varobar,+} + e_{g,j}^{\varobar,+}) e_{f\varobar h, g\varobar j}^{\varominus,+}}=&\big\{
(0,0),
(1/2,\alpha)\star (\alpha,1)
\big\}_f
+
\big\{
(0,\beta)\star (0,0),
(1/2,1)\star (\alpha,1)
\big\}_g\\
+&
\big\{
(1/2,0)\star(\beta,0),
(1,\alpha)\star (1,1)
\big\}_h
+
\big\{
(1/2,\beta)\star(\beta,0),
(1,1)
\big\}_j.
\end{align*}
\endgroup
These are homotopic, since any two paths in $[0,1]^2$ are homotopic by a linear homotopy, so $\{\gamma,\phi\}_f \simeq \{\gamma',\phi'\}_f$ for any $\gamma\leq\phi,\gamma'\leq\phi'$ and any $f$.
\end{proof}

\section{The Cocycle Spectrum of a Space}\label{sec:cocycle-spectrum}

In this section, we shall construct an auxiliary object which mediates between the algebraic and homotopical point of view. We assume familiarity with simplicial sets (see \cite{Goerss-Jardine}). Recall that the Moore complex $C(K)_*$ of a pointed simplicial set $K$ has the group $\Z K_n / \Z\pt$ as $n$-chains. We adopt the standard notation ${C(X)=C(\sing X)}$ for $X\in \Top^*$.  Let $L_+$ denote $L$ with a disjoint base-point.

\begin{definition}
The $n$-th space of the \emph{cocycle spectrum} is the simplicial vector space of chain maps ($V[-n]_* =V^{n-*}$ for $*\geq 0$ with zero differential is viewed as an object of the category $\Ch_{\geq 0}$ of non-negative chain complexes):
\begin{equation}\label{eq:cocycle-spectrum}
	Z^n(K\wedge \Delta^\bullet_+) = \Ch_{\geq 0}\big(	C(K\wedge\Delta^\bullet_+)_*, V[-n]_*	\big)
	=\prod_{i+j=n} Z^i(K\wedge \Delta^\bullet_+; V^j)
\end{equation}
Being fixed, we omit $V$ from the notation on the left of \eqref{eq:cocycle-spectrum}.
\end{definition}

The spaces $Z^n(K\wedge \Delta^\bullet_+)$ are mapping spaces $\mathrm{Map}_{\mathrm{Ch}}(C(K), V[-n])$ in the $\infty$-category of non-negative chain complexes \cite[Section~13]{Lurie-Derived-I}, so the cocycle spectrum may be regarded as a function spectrum construction. Weakly equivalent spaces were introduced in \cite{Hopkins-Singer}, but we will see below that it is crucial to work with \eqref{eq:cocycle-spectrum}.\bigskip

Recall the \emph{Alexander-Whitney} and \emph{Eilenberg-Zilber} chain maps
\[
	EZ\colon C(K)\otimes C(L)\rightarrow C(K\wedge L),\quad
	AW\colon C(K\wedge L)\rightarrow C(K)\otimes C(L). 
\]
The \emph{slant product} of a cochain $u$ with a chain $e$ is the cochain ${u/e}$ defined by
${(u/e)(d) = u\left( EZ(d\otimes e) \right)}$. Since $EZ$ is a chain map, we get a Stokes formula
\begin{equation}\label{stokes-slant}
	(\delta u/e)  = \delta(u/e) - (-1)^{|u|+|e|} u/\partial e.
\end{equation}
Let $[\Delta^i_+] \in C_i(\Delta^i_+)$ and $[ S^1] \in C_1(S^1)$ denote the canonical chains ($S^1=\Delta^1/\partial \Delta^1$).\medskip

\noindent
We take from \cite[D.13]{Hopkins-Singer} the isomorphism `slant product along the $i$-chain $[\Delta^i_+]$'
\begin{equation}\label{iso-tohomotopy}
	\pi_i \left( Z^n(K\wedge\Delta^\bullet_+), 0 \right) \cong H^{n-i}(K; V),\quad
	f\in Z^n(K\wedge \Delta^i_+) \mapsto f / [\Delta^i_+].
\end{equation}
(this fact is also proven in \cite[Lemma~5.7]{dr-arbeit}.)

\begin{lemma}\label{lem:coequalizer-proof}
There is a canonical isomorphism of simplicial sets
\begin{equation}\label{eqn:loop-inner-circle}
\Omega Z^n(K\wedge \Delta^\bullet_+) \cong Z^n(K\wedge\Delta^\bullet_+\wedge S^1).
\end{equation}
\end{lemma}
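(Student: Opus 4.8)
The plan is to identify $\Omega$ with the simplicial cotensor by the circle and then exhibit both sides of \eqref{eqn:loop-inner-circle} as representing the same functor. Since $Z^n(K\wedge\Delta^\bullet_+)$ is a simplicial vector space, hence a Kan complex, I model the loop space by the pointed mapping space $\Omega Y=\mathrm{Map}_*(S^1,Y)$ with $S^1=\Delta^1/\partial\Delta^1$; its $m$-simplices are the pointed simplicial maps $S^1\wedge\Delta^m_+\to Y$. Writing $Y=Z^n(K\wedge\Delta^\bullet_+)$, the lemma follows once I establish the representability statement
\[
	\mathrm{Hom}_*\!\left(A,\,Z^n(K\wedge\Delta^\bullet_+)\right)\;\cong\; Z^n(K\wedge A),
\]
natural in the pointed simplicial set $A$, where $\mathrm{Hom}_*$ denotes pointed simplicial maps. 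Indeed, taking $A=S^1\wedge\Delta^m_+$ and using the symmetry of the smash product to rewrite $K\wedge S^1\wedge\Delta^m_+=K\wedge\Delta^m_+\wedge S^1$ produces a bijection on $m$-simplices
\[
	(\Omega Y)_m=\mathrm{Hom}_*\!\left(S^1\wedge\Delta^m_+,\,Y\right)\cong Z^n(K\wedge\Delta^m_+\wedge S^1),
\]
which is precisely the set of $m$-simplices of the right-hand side of \eqref{eqn:loop-inner-circle}.

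The representability statement is where the real work sits, and it is a formal consequence of the good variance of the three functors building $Z^n(K\wedge-)$. First I would record that $L\mapsto C(L)=N(\Z L/\Z\pt)$ from pointed simplicial sets to $\Ch_{\geq 0}$ is a composite of the reduced free abelian group functor and the normalization equivalence, both left adjoints, hence preserves all colimits; likewise $K\wedge(-)$ is a left adjoint (to the internal pointed mapping space) and so preserves colimits. On the other side, $\Ch_{\geq 0}(-,V[-n])$ turns colimits into limits. Consequently $G(A):=Z^n(K\wedge A)=\Ch_{\geq 0}(C(K\wedge A),V[-n])$ is a \emph{continuous} presheaf on pointed simplicial sets, and I would conclude representability by the density of $\{\Delta^m_+\}$. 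Concretely, using the co-Yoneda presentation $A\cong\colim_{\Delta^m_+\to A}\Delta^m_+$ (a pointed map $\Delta^m_+\to A$ being the same datum as an $m$-simplex of $A$), the left adjoints give $C(K\wedge A)\cong\colim_{\Delta^m_+\to A}C(K\wedge\Delta^m_+)$, so that
\[
	Z^n(K\wedge A)\cong\lim_{\Delta^m_+\to A}Z^n(K\wedge\Delta^m_+)=\lim_{\Delta^m_+\to A}Y_m,
\]
while the very same presentation computes $\mathrm{Hom}_*(A,Y)=\lim_{\Delta^m_+\to A}\mathrm{Hom}_*(\Delta^m_+,Y)=\lim_{\Delta^m_+\to A}Y_m$, using $\mathrm{Hom}_*(\Delta^m_+,Y)=Y_m$. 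Comparing the two limits over the same indexing category yields the desired natural isomorphism.

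It then remains to check that these levelwise bijections assemble into an isomorphism of simplicial sets, i.e.\ that they commute with the simplicial operators. On the left the simplicial structure of $\mathrm{Map}_*(S^1,Y)$ comes from precomposition in the $\Delta^m_+$ variable; on the right it comes from the cosimplicial structure of $\Delta^\bullet$ inside $Z^n(K\wedge\Delta^\bullet_+\wedge S^1)$. Both are induced by the maps $\Delta^{m'}\to\Delta^m$ acting through $K\wedge\Delta^\bullet_+\wedge S^1$, so compatibility is immediate from the naturality of the representability isomorphism in $A$, applied to $A=S^1\wedge\Delta^\bullet_+$.

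The main obstacle, and the only point requiring genuine care, is the base-point bookkeeping in the density argument: I must use the \emph{pointed} co-Yoneda presentation (over $\Delta^m_+$, not $\Delta^m$) and verify that $K\wedge(-)$ and the reduced chain functor $C$ genuinely commute with these pointed colimits, so as to avoid the spurious $K\wedge A_+$ that the unpointed computation would produce. Everything else is the formal interplay ``left adjoints preserve colimits / mapping into a fixed object sends colimits to limits,'' together with the symmetry of the smash used to move $S^1$ past $\Delta^m_+$. Along the way I would also confirm that $\Omega Y=\mathrm{Map}_*(S^1,Y)$ is the intended model, using that $Y$ is a simplicial abelian group and hence Kan, so that this cotensor computes the correct loop space and delivers an honest isomorphism rather than merely a weak equivalence.
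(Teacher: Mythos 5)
Your proof is correct, and it reaches \eqref{eqn:loop-inner-circle} by a genuinely more formal route than the paper's. The paper works only with the specific object $K\wedge\Delta^k_+\wedge S^1$: the prism subdivisions $h_i\colon\Delta^{k+1}\to\Delta^k\times\Delta^1$ exhibit it as an explicit coequalizer of wedges of copies of $K\wedge\Delta^{k+1}_+$ and $K\wedge\Delta^k_+$; since the reduced Moore complex is a left adjoint, a cocycle on it is the same as a compatible family $f_0,\dots,f_k\in Z^n(K\wedge\Delta^{k+1}_+)$, which is then read off directly as a $k$-simplex of $\Omega Z^n(K\wedge\Delta^\bullet_+)$ in the same combinatorial model $\mathrm{Map}_*(S^1,-)$ that you use. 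You instead prove the stronger cotensor formula $\mathrm{Hom}_*\bigl(A,Z^n(K\wedge\Delta^\bullet_+)\bigr)\cong Z^n(K\wedge A)$ for every pointed simplicial set $A$, deducing it from density of $\{\Delta^m_+\}$ in $\sSet^*$ together with the facts that $C(K\wedge-)$ preserves colimits and that $\Ch_{\geq 0}(-,V[-n])$ turns them into limits, and then specialize $A=S^1\wedge\Delta^m_+$. The underlying mechanism --- colimit-preservation of the reduced chains functor --- is identical; what your version buys is generality and the avoidance of the explicit prism combinatorics, at the cost of justifying the pointed co-Yoneda presentation, which you rightly flag as the delicate step: the indexing category of maps $\Delta^m_+\to A$ must be taken in $\sSet^*$, so it contains the zero morphisms between the $\Delta^m_+$, and naturality with respect to these is exactly what encodes pointedness and makes $\{\Delta^m_+\}$ dense. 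With that in place the two arguments produce the same isomorphism, your limit over all simplices of $S^1\wedge\Delta^k_+$ restricting to the paper's coequalizer data on the nondegenerate prism simplices.
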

\begin{proof}
The usual subdivision of the prism ${h_i\colon \Delta^{k+1}\rightarrow \Delta^k\times \Delta^1}$ for $i=0,\ldots,k$ \cite[p.17]{Goerss-Jardine} leads to a coequalizer diagram in pointed simplicial sets $\sSet^*$:
\[\xymatrix@R=0.5cm@C=1.5cm{
K\wedge \Delta^k_+\ar[r]^{\id\wedge d^{j+1}}\ar[d]_{\mathrm{in}_j}						&	K\wedge \Delta^{k+1}_+\ar[d]^{\mathrm{in}_j}\\
\bigvee_{j=-1}^k K\wedge\Delta^k_+	\ar@<+3pt>[r]\ar@<-3pt>[r]		&	\bigvee_{j=0}^k K\wedge \Delta^{k+1}_+\ar[r]^-{h_0\vee \ldots \vee h_k}	&	K\wedge \Delta^k_+\wedge S^1\\
K\wedge \Delta^k_+\ar[r]_{\id\wedge d^{j+1}}\ar[u]^{\mathrm{in}_j}						&	K\wedge \Delta^{k+1}_+\ar[u]_{\mathrm{in}_{j+1}}
}\]
(letting $\mathrm{in}_l$ be the constant base-point maps if $l=-1, k+1$.) The reduced Moore complex $C\colon \sSet^*\rightarrow \Ch_{\geq 0}$ is a left-adjoint and therefore preserves colimits. Hence a $k$-simplex $f\in Z^n(K\wedge \Delta^k_+\wedge S^1)$ is a chain map defined on the coequalizer of
\[\xymatrix@1{
\bigoplus_{j=-1}^k C(K\wedge \Delta^k_+)\ar@<-2pt>[r]\ar@<+2pt>[r] & \bigoplus_{j=0}^k C(K\wedge \Delta^{k+1}_+).
}
\]
This amounts to a sequence of maps $f_i \in Z^n(K\wedge \Delta^{k+1}_+)$ which are compatible exactly so as to represent a $k$-simplex of the loop space $\Omega Z^n(K\wedge \Delta^\bullet_+)$ (a $k$-simplex of a simplicial loop space $\Omega L$ may be described as a sequence of ${(k+1)}$-simplices ${f_0, \ldots, f_k}$ with ${d_i f_i = d_i f_{i-1}}$ and ${d_0 f_0 = d_{k+1} f_k = *}$).
\end{proof}

\begin{definition}
Letting `$\incl$' be given by the canonical $1$-chain $[S^1]$, consider
\begin{equation}\label{eqn:diag-of-quasi-isos}
\xymatrix@C=1cm{
C(K\wedge \Delta^\bullet_+)\otimes \Z[1]\ar[r]^-{\id\otimes\incl}&
C(K\wedge \Delta^\bullet_+)\otimes C(S^1)\ar[r]^-{EZ}&
C(K\wedge \Delta^\bullet_+\wedge S	^1).
}
\end{equation}
Combining that $-\otimes \Z[1]$ is the shift $[-1]$ with Lemma~\ref{lem:coequalizer-proof}, pullback along \eqref{eqn:diag-of-quasi-isos} gives the \emph{costructure maps} (`co' because they map away from the loop space)
\begin{equation}\label{costructure-maps}
	\psi\colon \Omega Z^n(K\wedge \Delta^\bullet_+)=Z^n(K\wedge\Delta^\bullet_+\wedge S^1)\rightarrow Z^{n-1}(K\wedge\Delta^\bullet_+).
\end{equation}
\end{definition}

\begin{proposition}
The costructure maps $\psi$ are natural weak equivalences.
\end{proposition}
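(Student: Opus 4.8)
The plan is to reduce the statement to a computation on homotopy groups, exploiting that both the source and target of $\psi$ are simplicial vector spaces. Since $\Omega Z^n(K\wedge\Delta^\bullet_+)$ and $Z^{n-1}(K\wedge\Delta^\bullet_+)$ are simplicial abelian groups, hence Kan, a map between them is a weak equivalence precisely when it induces isomorphisms on all homotopy groups based at the zero element (by Dold--Kan, $\pi_i(-,0)$ computes the homology of the associated normalized complex, and translation identifies the remaining components). Naturality in $K$ is automatic, since every ingredient in the definition of $\psi$---the Eilenberg--Zilber map, the inclusion of the fundamental chain $[S^1]$, and the coequalizer of Lemma~\ref{lem:coequalizer-proof}---is natural. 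So the task is to show each $\psi$ is a homotopy isomorphism.

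First I would unwind the definition to identify $\psi$ explicitly. Tracing \eqref{eqn:diag-of-quasi-isos}, a simplex $f\in Z^n(K\wedge\Delta^k_+\wedge S^1)$ is carried by $\psi$ to the cochain $d\mapsto f\big(EZ(d\otimes[S^1])\big)$ on $C(K\wedge\Delta^k_+)$; that is, $\psi(f)=f/[S^1]$ is nothing but the slant product against the fundamental $1$-chain of $S^1$ (the shift by $\otimes\,\Z[1]$ only affects the degree and a sign). This is the key reformulation: the costructure map is, levelwise, slant product by $[S^1]$.

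Next I would compute the two sides through \eqref{iso-tohomotopy}. Applying it with $K\wedge S^1$ in place of $K$, and using Lemma~\ref{lem:coequalizer-proof}, gives
\[
\pi_i\big(\Omega Z^n(K\wedge\Delta^\bullet_+),0\big)\;\cong\;\pi_i\big(Z^n(K\wedge S^1\wedge\Delta^\bullet_+),0\big)\;\cong\; H^{n-i}(K\wedge S^1;V),
\]
with a class represented by $f\mapsto f/[\Delta^i_+]$, while \eqref{iso-tohomotopy} in degree $n-1$ gives $\pi_i(Z^{n-1}(K\wedge\Delta^\bullet_+),0)\cong H^{(n-1)-i}(K;V)$ via $g\mapsto g/[\Delta^i_+]$. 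Under these identifications $\psi_*$ sends the class of $f$ to that of $(f/[S^1])/[\Delta^i_+]$. Commuting the two slant products---using naturality of $EZ$ and the symmetry of the smash factors $\Delta^i_+$ and $S^1$, with signs governed by the Stokes formula \eqref{stokes-slant}---identifies this with slant product by $[S^1]$ on $H^{n-i}(K\wedge S^1;V)$, i.e.~with the reduced suspension isomorphism $H^{n-i}(\Sigma K;V)\cong H^{(n-1)-i}(K;V)$. Being an isomorphism on every $\pi_i$, the map $\psi$ is a weak equivalence.

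The main obstacle is this last matching step: verifying that levelwise slant product by $[S^1]$ really induces the suspension isomorphism under the two separate applications of \eqref{iso-tohomotopy}. This requires keeping track of the order and signs of the iterated slants $\cdot/[\Delta^i_+]$ and $\cdot/[S^1]$, which interact through the Eilenberg--Zilber coherence and \eqref{stokes-slant}; conceptually it is just the statement that $[S^1]\in\tilde H_1(S^1)$ is the suspension generator, but the cochain-level bookkeeping is where the care is needed. Equivalently, and more conceptually, one may note that \eqref{eqn:diag-of-quasi-isos} is a levelwise quasi-isomorphism---$\incl$ is one since the normalized chains of $S^1$ reduce to $\Z[1]$, the map $\id\otimes\incl$ remains one by freeness of $C(K\wedge\Delta^k_+)$, and $EZ$ is one by Eilenberg--Zilber---so that precomposition induces an equivalence of the derived mapping spaces $\Map_{\Ch}(-,V[-n])$; the homotopy computation above is the concrete manifestation of this.
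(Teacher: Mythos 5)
Your proposal follows essentially the same route as the paper: reduce to homotopy groups via the slant-product isomorphism \eqref{iso-tohomotopy}, identify $\pi_k(\psi)$ with the suspension isomorphism given by slanting along $[S^1]$, and resolve the discrepancy between the two orderings of iterated slant products by an Eilenberg--Zilber coherence chain homotopy. The one step you flag as ``the main obstacle'' is exactly what the paper carries out in three lines: the chain homotopy $h$ for coassociativity of $EZ$ gives a difference $f\circ(\partial h+h\partial)=\delta(fh)$, a coboundary, since $f$ is a chain map into a complex with zero differential.
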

\begin{proof}
This follow from the standard fact that the suspension may be expressed as the slant product along $S^1$: we show that we have commutative diagrams
\[\xymatrix@C=1.3cm{
\pi_k Z^n(K\wedge \Delta^\bullet_+\wedge S^1)\ar[d]_{\eqref{iso-tohomotopy}}^\cong\ar[r]^{\pi_k(\psi)}&
\pi_k Z^{n-1}(K\wedge \Delta^\bullet_+)\ar[d]^{\eqref{iso-tohomotopy}}_\cong\\
H^{n-k}(K\wedge S^1)\ar[r]_{\mathrm{susp}}^\cong&H^{n-k-1}(K).
}\]
Explicitly, for $f\in \pi_k Z^n(K\wedge \Delta^\bullet_+\wedge S^1)$ we need to compare the two assignments on chains $\sigma \in C_{n-k-1}(K)$ given by
\begin{equation}\label{EZcohom}
	f\circ EZ\big ( \Delta^k \otimes EZ(\sigma\otimes S^1) \big),\quad
	f\circ EZ\big(   EZ(\Delta^k\otimes \sigma) \otimes S^1  \big).
\end{equation}
Since $EZ$ is coassociative up to chain homotopy, we have a homomorphism $h$ so that the difference is (using that $f$ is a chain map and $V_*$ has zero differential)
\[
	f\circ (\partial h\sigma + h\partial\sigma) = \partial f (h\sigma) + fh\partial \sigma = 0+\delta(fh)\sigma
\]
Therefore, both elements in \eqref{EZcohom} represent the same cohomology class.
\end{proof}

\begin{definition}
By the costructure maps on $\sing (E_n^X)$ we mean the isomorphisms
\begin{equation}\label{fspec-struct}
	\Omega \sing(E_{n+1}^X) \cong \sing(\Omega E_{n+1}^X) \xrightarrow{\sing(\varepsilon_n^\mathrm{adj})^{-1}_*} \sing(E_n^X).
\end{equation}
\end{definition}

\section{The $2$-Monoidal Chern-Dold Transformation}\label{sec:refined-chern}

Our construction of \eqref{eq:chern-char} will factor into three $2$-monoidal functors
\begin{equation}\label{eq:three-steps}
	\ch_X\colon \Map(X, E_n) 	\xrightarrow{\alpha} \scrZ^n_{\varobar,\varominus}(X)
					\xrightarrow{\beta} \scrZ^n_+(X)
					\xrightarrow{\gamma} \scrZ^n(X).
\end{equation}
We begin by explaning the new categories in \eqref{eq:three-steps}. By Example~\ref{examples-2}~(i), addition gives a strict $2$-monoidal structure on the fundamental groupoid $\Pi_1{Z^n(\sing X\wedge \Delta^\bullet_+)}$ that we denote by $\scrZ^n_+(X)$.
Hence the objects of $\scrZ^n_+(X)$ are singular cocycles $Z^n(X)$ while the morphisms $h\colon d_1 h \rightarrow d_0 h$ are cocycles $h\in Z^n(\sing X\wedge \Delta^1_+)$. Another way to get a $2$-monoidal structure on the same category is to note that the costructure map $\psi$ induces equivalences of categories
\begin{equation}\label{double-cocycle}
	\Pi_1 \Omega^2 Z^{n-2}(\sing X \wedge \Delta^\bullet_+) \xrightarrow{\sim} \Pi_1 Z^n(\sing X\wedge \Delta^\bullet_+).
\end{equation}
The left-hand side has a natural $2$-monoidal structure by Example~\ref{ex:groupoid-two-kan}. According to Proposition~\ref{prop:equivalent-stuff}, we may \emph{choose} natural $2$-monoidal structures $\scrZ^n_{\varobar,\varominus}(X)$ on the right-hand categories, making \eqref{double-cocycle} a natural $2$-monoidal equivalence.\medbreak
%

\subsection{Fundamental Cocycles}\label{ssec:fund-cocycles}

Recall that \emph{fundamental cocycles} are a family of singular cocycles $\iota_n \in Z^n(E_n; V)$ implementing the Chern-Dold character via
\[
\ch(f) = f^*[\iota_n],\qquad \forall f \in E^n(X) = [X,E_n].
\]
By \cite[4.8]{Hopkins-Singer}, there is a choice satisfying $\varepsilon_{n}^*\iota_{n+1}/[S^1] = \iota_n$, where $\varepsilon_{n}\colon {E_n\wedge S^1\rightarrow E_{n+1}}$ are the structure maps (a more detailed proof of this assertion may be found in \cite[Section~3.1.2]{dr-arbeit}).
Stated differently, we have chain maps
\[
	\iota_n\colon C(E_n)=C(\sing E_n) \rightarrow V[-n]
\]
fitting into commutative diagrams
\begin{equation}\begin{aligned}\label{fund-cocycle-comp}
\xymatrix@R=0.5cm@C=1.75cm{
	C(E_n)\otimes \Z[1]\ar[r]^{EZ(\id\otimes\incl)}\ar@{=}[d]&
	C(E_n \wedge S^1)\ar[r]^{C(\varepsilon_n)}&
	C(E_{n+1})\ar[d]^{\iota_{n+1}}\\
	C(E_n)[-1]\ar[rr]_{\iota_n[-1]}&&V[-n-1].
}\end{aligned}\end{equation}

\begin{definition}
We define simplicial maps $A_n\colon\sing(E_n^X) \rightarrow Z^n(\sing X \wedge \Delta^\bullet_+)$ by
\[
	A_n(f)\colon C(\sing X \wedge \Delta^k_+) \xrightarrow{C(f^\adj)} C(\sing E_n) \xrightarrow{\iota_n} V[-n],
\]
where, for a $k$-simplex $f\colon X\wedge |\Delta^k_+| \rightarrow E_n$ of $\sing(E_n^X)$, we use the unit to write
\[
f^\adj\colon \sing X\wedge \Delta^k_+ \rightarrow \sing X \wedge \sing |\Delta^k_+| = \sing(X\wedge |\Delta^k_+|) \xrightarrow{\sing f} \sing E_n.
\]

\end{definition}

\begin{lemma}\label{lem:com-costructure}
The maps $A_n$ commute with the costructure maps:
\[\xymatrix@C=1.25cm{
\sing(E_n^X)\ar[r]^-{A_n}&Z^n(\sing X \wedge \Delta^\bullet_+)\\
\Omega\sing(E_{n+1}^X)\ar[u]^{\eqref{fspec-struct}}\ar[r]_-{\Omega A_{n+1}}& \Omega Z^{n+1}(\sing X \wedge \Delta^\bullet_+).\ar[u]_{\eqref{costructure-maps}}
}\]
\end{lemma}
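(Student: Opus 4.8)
The plan is to verify the square simplex-by-simplex and to reduce the resulting identity of cochains to the fundamental cocycle relation \eqref{fund-cocycle-comp}. Since both composites are simplicial maps with target $Z^n(\sing X\wedge\Delta^\bullet_+)$, it suffices to compare their effect on an arbitrary $k$-simplex. Using the identification $\Omega\sing(E_{n+1}^X)\cong\sing(\Omega E_{n+1}^X)$ together with the loop/smash adjunction, I would represent such a $k$-simplex by a map $\bar g\colon X\wedge|\Delta^k_+|\wedge S^1\to E_{n+1}$ whose partial adjoint $g\colon X\wedge|\Delta^k_+|\to\Omega E_{n+1}$ is the given element.

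First I would compute the up-then-right composite. Applying \eqref{fspec-struct} replaces $g$ by $\tilde g=(\varepsilon_n^\adj)^{-1}\circ g\colon X\wedge|\Delta^k_+|\to E_n$, and then $A_n$ produces the cochain $\iota_n\circ C(\tilde g^\adj)$ on $C(\sing X\wedge\Delta^k_+)$. The only geometric input here is the identity $\bar g=\varepsilon_n\circ(\tilde g\wedge\id_{S^1})$, which follows by unwinding that $\varepsilon_n^\adj\colon E_n\to\Omega E_{n+1}$ is the adjoint of the structure map $\varepsilon_n\colon E_n\wedge S^1\to E_{n+1}$; passing to singular chains this yields $C(\bar g^\adj)=C(\varepsilon_n)\circ C(\tilde g^\adj\wedge\id_{S^1})$.

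Next I would compute the right-then-up composite. Under the identification of Lemma~\ref{lem:coequalizer-proof}, $\Omega A_{n+1}$ sends the $k$-simplex to the cocycle $\iota_{n+1}\circ C(\bar g^\adj)$ on $C(\sing X\wedge\Delta^k_+\wedge S^1)$, and the costructure map \eqref{costructure-maps} then pulls back along the map $EZ(\id\otimes\incl)$ of \eqref{eqn:diag-of-quasi-isos}, i.e.\ forms the slant product along $[S^1]$. Substituting the chain-level identity for $C(\bar g^\adj)$ above and commuting $C(\tilde g^\adj\wedge\id_{S^1})$ past $EZ(\id\otimes\incl)$ by naturality of $EZ$, this composite becomes $\iota_{n+1}\circ C(\varepsilon_n)\circ EZ(\id\otimes\incl)\circ(C(\tilde g^\adj)\otimes\id)$. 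Now the fundamental cocycle relation \eqref{fund-cocycle-comp} identifies $\iota_{n+1}\circ C(\varepsilon_n)\circ EZ(\id\otimes\incl)$ with $\iota_n[-1]$, so the composite collapses to $\iota_n\circ C(\tilde g^\adj)$, matching the first computation.

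The main obstacle is bookkeeping rather than substance: one has to pin down exactly how $\Omega A_{n+1}$ looks after the two identifications — the loop space of the mapping space via $\Omega\sing(E_{n+1}^X)\cong\sing(\Omega E_{n+1}^X)$ and the loop space of the cocycle spectrum via the coequalizer of Lemma~\ref{lem:coequalizer-proof} — and to check that the combinatorial $S^1=\Delta^1/\partial\Delta^1$ appearing there and the topological $S^1$ of the loop/smash adjunction are matched compatibly, so that the slant product along $[S^1]$ really is the pullback along $EZ(\id\otimes\incl)$. Once these identifications and the naturality square relating $\sing(\varepsilon_n)$ to the simplicial smash are in place, the verification is the purely formal chain-level manipulation above, whose single nontrivial step is \eqref{fund-cocycle-comp}.
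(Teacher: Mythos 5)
Your proposal is correct and follows essentially the same route as the paper's proof: both reduce the square to comparing $\iota_{n+1}\circ C(\bar g^{\adj})\circ EZ\circ(\id\otimes\incl)$ with the shift of $\iota_n\circ C(\tilde g^{\adj})$ via the identity $\bar g=\varepsilon_n\circ(\tilde g\wedge\id_{S^1})$, naturality of $EZ$, and the fundamental cocycle compatibility \eqref{fund-cocycle-comp}. The bookkeeping you flag (matching the combinatorial and topological circles and the two loop-space identifications) is handled in the paper by the auxiliary simplicial maps $\varphi(f)$ built from the counit, but the substance of the argument is identical.
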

\begin{proof}
For $f: X\wedge |\Delta^\bullet_+| \wedge S^1 \rightarrow E_{n+1}$ let $g: X\wedge |\Delta^\bullet_+| \rightarrow E_n$ be the map $(\varepsilon_n^\mathrm{adj})^{-1}f$ with $\varepsilon_n\circ(g\wedge 1_{S^1}) = f$. If we write $K=\sing X$, the counit gives a simplicial map
\[
	\varphi(f)\colon K \wedge \Delta^\bullet_+\wedge S^1 \rightarrow \sing(X \wedge |\Delta^\bullet_+| \wedge S^1) \rightarrow \sing E_{n+1}.
\]
 Unwinding the definitions of \eqref{costructure-maps}, \eqref{fspec-struct}, and $A$, we see that we need to compare
\begin{align*}
	C(K\wedge \Delta^\bullet_+)\otimes \Z[1]&\xrightarrow{1\otimes\incl} C(K\wedge\Delta^\bullet_+)\otimes C(S^1)\xrightarrow{EZ} C(K\wedge\Delta^\bullet_+\wedge S^1)\\
	&\xlongrightarrow{\;\;\varphi(f)_*\;\;\,} C(E_{n+1}) \xrightarrow{\iota_{n+1}} V[-n-1]
\end{align*}
with the shift by one of
\[
	C(K\wedge \Delta^\bullet_+) \xlongrightarrow{\varphi(g)_*} C(E_n) \xlongrightarrow{\iota_n} V[-n].
\]
But these maps appear as the outer maps in the diagram
\[\xymatrix{
C(K\wedge\Delta^\bullet_+)\otimes \Z[1]\ar[r]^-{g_*\otimes 1}\ar[d]_{1\otimes\incl}&C(E_n)\otimes \Z[1]\ar[d]^{1\otimes\incl}\ar[r]^{\iota_n\otimes 1} & V[-n]\otimes\Z[1]\\
C(K\wedge\Delta^\bullet_+)\otimes C(S^1)\ar[r]^-{g_*\otimes 1}\ar[d]_{EZ}& C(E_n)\otimes C(S^1)\ar[d]^{EZ}\ar@{}[r]|-{\text{\normalsize\eqref{fund-cocycle-comp}}}&\\
 C(K\wedge\Delta^\bullet_+\wedge S^1)\ar@/_0.5cm/[rr]_{f_*}\ar[r]^-{(g\wedge 1)_*} & C(E_n\wedge S^1)\ar[r]^{(\varepsilon_n)_*}
& C(E_{n+1})\ar[uu]_{\iota_{n+1}}
}\]
which commutes by naturality of $EZ$ and the compatibility \eqref{fund-cocycle-comp}.
\end{proof}

\subsection{Proof of Theorem~\ref{thm-refined-chern}}\label{ssec:proof-ref-chern} The proof is divided into three steps:

\begin{lemma}\label{lem:alpha}
The maps $A_n$ induce a natural family of $2$-monoidal functors
\[
\Pi_1 A_n=\alpha_X\colon \Map(X, E_n) \rightarrow \scrZ^n_{\varobar,\varominus}(X).
\]
\end{lemma}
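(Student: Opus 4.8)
The plan is to exhibit $\alpha_X=\Pi_1 A_n$ as the top edge of a commutative square whose three other edges are already $2$-monoidal, and then to transport a $2$-monoidal structure onto $\alpha_X$ by part~(i) of Proposition~\ref{prop:equivalent-functors}. The point is that both $\Map(X,E_n)$ and $\scrZ^n_{\varobar,\varominus}(X)$ carry $2$-monoidal structures \emph{transported} from double-loop fundamental groupoids — via the structure-map isomorphism of Definition~\ref{def:explanation} on the source, and via the chosen costructure equivalence \eqref{double-cocycle} on the target — so the natural move is to compare $A_n$ with $\Omega^2 A_{n+2}$ through exactly these transport equivalences.

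First I would iterate Lemma~\ref{lem:com-costructure}: applying it at level $n$, applying $\Omega$ to its instance at level $n+1$, and stacking the two squares along their shared edge produces a commutative square of pointed simplicial sets
\[
\xymatrix@C=1.4cm{
\sing(E_n^X)\ar[r]^-{A_n}&Z^n(\sing X\wedge\Delta^\bullet_+)\\
\Omega^2\sing(E_{n+2}^X)\ar[u]\ar[r]_-{\Omega^2 A_{n+2}}&\Omega^2 Z^{n+2}(\sing X\wedge\Delta^\bullet_+)\ar[u]
}
\]
whose vertical arrows are the two-fold costructure maps \eqref{fspec-struct} and \eqref{costructure-maps}. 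Applying the functor $\Pi_1$ yields a commutative square in $\Cat$ in which the left vertical becomes the isomorphism defining the $2$-monoidal structure on $\Map(X,E_n)$, and the right vertical becomes the chosen $2$-monoidal equivalence \eqref{double-cocycle} defining $\scrZ^n_{\varobar,\varominus}(X)$; both are therefore $2$-monoidal functors.

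Now $A_{n+2}$ is a pointed simplicial map between pointed Kan complexes — $\sing(E_{n+2}^X)$ is the singular complex of a space and $Z^{n+2}(\sing X\wedge\Delta^\bullet_+)$ is a simplicial abelian group — so by functoriality of $\Pi_1\Omega^2$ on $\Kan^*$ (Example~\ref{ex:groupoid-two-kan}) the bottom edge $\Pi_1\Omega^2 A_{n+2}$ is automatically a $2$-monoidal functor. Hence the composite $H$ of the bottom edge followed by the right vertical is $2$-monoidal, the left vertical $F$ is a $2$-monoidal equivalence, and commutativity of the square reads $H=\alpha_X\circ F$. Part~(i) of Proposition~\ref{prop:equivalent-functors} then equips $\alpha_X$ with a \emph{unique} $2$-monoidal structure for which this identity holds as $2$-monoidal functors. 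For naturality, every arrow in the square is natural in $X\in\Top^*$ (the $A_n$, the costructure maps, and both transport equivalences), so the uniqueness clause forces the structures on the $\alpha_X$ to be compatible across \eqref{eqn:naturality-gen}, giving a natural transformation of $\TwoMon$-valued functors.

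I expect the main obstacle to be bookkeeping rather than conceptual: one must check that the two vertical arrows, \emph{after} applying $\Pi_1$, coincide with the equivalences along which the $2$-monoidal structures were installed. This is unproblematic on the left, where the structure is transported along an isomorphism (Definition~\ref{def:explanation}) and is strict, but requires care on the right, where the structure on $\scrZ^n_{\varobar,\varominus}(X)$ was only \emph{chosen} via Proposition~\ref{prop:equivalent-stuff} so as to make \eqref{double-cocycle} $2$-monoidal. Once the right vertical is matched with that chosen equivalence (up to the degree shift appearing in \eqref{double-cocycle}), the remainder is the formal application of Proposition~\ref{prop:equivalent-functors}, the one genuinely nontrivial input — commutativity of the square — having already been supplied by Lemma~\ref{lem:com-costructure}.
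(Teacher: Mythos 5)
Your proposal is correct and follows essentially the same route as the paper: iterate Lemma~\ref{lem:com-costructure} to obtain the commutative square comparing $\Pi_1 A_n$ with $\Pi_1\Omega^2 A_{n\pm 2}$ through the two transport equivalences, note that the bottom and the two verticals are $2$-monoidal (Example~\ref{ex:groupoid-two-kan} and the definitions of the transported structures), and invoke Proposition~\ref{prop:equivalent-functors}~(i) plus its uniqueness clause to produce the structure on $\alpha_X$ and its naturality. Your explicit two-step stacking of Lemma~\ref{lem:com-costructure} and your remark about matching the right vertical with the \emph{chosen} equivalence \eqref{double-cocycle} only spell out details the paper leaves implicit.
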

\begin{proof}
Lemma~\ref{lem:com-costructure} asserts that the diagram of ordinary categories underlying
\[\xymatrix@C=3cm{
	\Map(X,E_n)\ar@{-->}[r]^{\alpha_X}					&	\scrZ^n_{\varobar,\varominus}(X)\\
	\Pi_1 \Omega^2 E_{n-2}^X\ar[u]^\cong\ar[r]^{\Pi_1 \Omega^2 (A_{n-2})}	&	\Pi_1 \Omega^2 Z^{n-2}(\sing X\wedge \Delta^\bullet_+)\ar[u]\ar[u]_{\eqref{double-cocycle}}
}\]
commutes. On the bottom is the $2$-monoidal functor $\Pi_1 \Omega^2$ from Example~\ref{ex:groupoid-two-kan} and the vertical functors are $2$-monoidal by definition of the $2$-monoidal structure on the categories upstairs. Proposition~\ref{prop:equivalent-functors}~(i) states that there is a unique way to put a $2$-monoidal structure $\alpha_X$ on $\Pi_1 A$ so as make this diagram commute in $\TwoMon$. Uniqueness allows us to conclude the naturality (in $X$) of this structure from the naturality of the $2$-monoidal structure on the other arrows.\end{proof}

\begin{lemma}\label{lem:beta}
The identity functor ${\Pi_1 Z^n(\sing X\wedge \Delta^\bullet_+)}$ has a unique \textup(hence natural\textup) $2$-monoidal structure $\beta_X$ making the diagram
\[\xymatrix{
	\scrZ^n_{\varobar,\varominus}(X)\ar@{-->}[r]^{\beta_X}&\scrZ^n_+(X)\\
	\left(\Pi_1 \Omega^2 Z^{n-2}(\sing X \wedge \Delta^\bullet_+), \varobar,\varominus\right)\ar[r]_{\eqref{plus-varobar-varominus}}\ar[u]^{\sim}_{\eqref{double-cocycle}}&
	\left(\Pi_1 \Omega^2 Z^{n-2}(\sing X \wedge \Delta^\bullet_+), +, +\right)\ar[u]_{\Pi_1 (\psi\Omega\psi)}
}\]
commute as a diagram of $2$-monoidal categories and functors.
\end{lemma}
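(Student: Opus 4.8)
The strategy is to read the square as a single instance of Proposition~\ref{prop:equivalent-functors}~(i). In the notation there, I would take for the equivalence $F$ the left vertical map \eqref{double-cocycle}, for $H$ the composite along the bottom and right edges --- that is, the comparison functor \eqref{plus-varobar-varominus} followed by $\Pi_1(\psi\Omega\psi)$ --- and let $\beta_X$ play the role of $G$. Commutativity of the square in $\TwoMon$ is then literally the equation $H=\beta_X\circ F$. Since $F$ is an equivalence, Proposition~\ref{prop:equivalent-functors}~(i) will produce a unique $2$-monoidal structure on $\beta_X$ with this property, provided $F$ and $H$ are themselves $2$-monoidal. It therefore suffices to equip $F$ and $H$ with $2$-monoidal structures.

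That $F$, namely \eqref{double-cocycle}, is a $2$-monoidal equivalence holds by construction: the structure $\scrZ^n_{\varobar,\varominus}(X)$ was defined, via Proposition~\ref{prop:equivalent-stuff}, precisely so as to make \eqref{double-cocycle} a natural $2$-monoidal equivalence. For $H$ I would exhibit it as a composite of two $2$-monoidal functors. The comparison \eqref{plus-varobar-varominus} is $2$-monoidal by the lemma establishing it, while $\Pi_1(\psi\Omega\psi)$ is strictly $2$-monoidal for the additive structures by Example~\ref{examples-2}~(i), once one observes that $\psi\Omega\psi$ is a homomorphism of simplicial abelian groups: the costructure map $\psi$ of \eqref{costructure-maps} is pullback along the fixed chain map \eqref{eqn:diag-of-quasi-isos} (assembled from $EZ$ and the inclusion of $[S^1]$), and precomposition with a chain map is additive, so $\psi$ --- and hence $\psi\Omega\psi$ --- respects the pointwise addition of cocycles. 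This additivity of the costructure maps is the one point that is not purely formal, and is the step I expect to require the most care; everything else is bookkeeping with the two propositions.

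With both inputs in hand, Proposition~\ref{prop:equivalent-functors}~(i) delivers the unique $2$-monoidal $\beta_X$ with $H=\beta_X\circ F$, which is exactly commutativity of the square in $\TwoMon$. On underlying categories \eqref{plus-varobar-varominus} is the identity, so $H$ reduces to $\Pi_1(\psi\Omega\psi)$, which is the very functor inducing \eqref{double-cocycle}; hence the underlying functor $\beta_X=H\circ F^{-1}$ is the identity on $\Pi_1 Z^n(\sing X\wedge\Delta^\bullet_+)$, confirming that $\beta_X$ is a $2$-monoidal structure on the identity functor as asserted. Finally, naturality in $X$ follows from uniqueness exactly as in Lemma~\ref{lem:alpha}: for $f\colon X\to Y$ both ways around the naturality square equip the relevant composite with a $2$-monoidal structure having the property that singles out $\beta_X$ in Proposition~\ref{prop:equivalent-functors}~(i), so the uniqueness clause forces them to agree, yielding the parenthetical ``hence natural'' in the statement.
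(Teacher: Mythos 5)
Your proposal is correct and follows essentially the same route as the paper: the paper's proof consists precisely of observing that $\Pi_1(\psi\Omega\psi)$ is $2$-monoidal because $\psi$ is linear, and then invoking Proposition~\ref{prop:equivalent-functors}~(i) with the left vertical equivalence as $F$ and the bottom-then-right composite as $H$. Your additional remarks on the underlying functor being the identity and on naturality-via-uniqueness are accurate elaborations of what the paper leaves implicit.
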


The right vertical map is $2$-monoidal since $\psi$ is linear. The proof of Lemma~\ref{lem:beta} is now immediate from Proposition~\ref{prop:equivalent-functors}~(i).\smallskip

Both categories $\scrZ^n_+(X), \scrZ^n(X)$ have the same objects $Z^n(X)$ and we let $\gamma_X$ be the identity on objects. To a morphism $f\in {Z^n( \sing X \wedge \Delta^1_+ )}$ in $\scrZ^n_+(X)$ from $d_1 f$ to $d_0 f$ we assign the class of the cochain $f / [\Delta^1] \in C^{n-1}(X;V) / \im(\delta)$.

\begin{lemma}\label{lem:gamma}
$\gamma_X\colon \scrZ^n_+(X) \rightarrow \scrZ^n(X)$ is a well-defined strict $2$-monoidal functor.
\end{lemma}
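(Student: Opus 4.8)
The plan is to reduce everything to two elementary properties of the operation ``slant product along $[\Delta^1]$'', namely its $\Z$-linearity in the cochain argument and the Stokes formula \eqref{stokes-slant}. Recall that $\gamma_X$ is the identity on objects (both $\scrZ^n_+(X)$ and $\scrZ^n(X)$ carry the object set $Z^n(X)$) and sends a morphism $f\in Z^n(\sing X\wedge\Delta^1_+)$ to the class of $f/[\Delta^1]\in C^{n-1}(X)/\im(\delta)$.

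First I would establish well-definedness in two steps. For the coboundary relation, apply \eqref{stokes-slant} to the cocycle $f$ (so $\delta f=0$) and $e=[\Delta^1]$; since $f/[v_i]$ equals a face $d_{1-i}f$ under the standard identification of the vertices of $\Delta^1$ with its cofaces, this yields $\delta(f/[\Delta^1])=\pm(d_1 f-d_0 f)$, exactly the relation required for $f/[\Delta^1]$ to represent a morphism $d_1 f\to d_0 f$ in $\scrZ^n(X)$ (the sign being fixed by the conventions underlying \eqref{stokes-slant} and \eqref{iso-tohomotopy}). For independence of the chosen representing path, let $H\in Z^n(\sing X\wedge\Delta^2_+)$ exhibit a homotopy rel endpoints between representatives $f,f'$; applying \eqref{stokes-slant} to $H$ and $[\Delta^2]$, using the expansion $H/\partial[\Delta^2]=\sum_i(-1)^i (d_i H)/[\Delta^1]$ together with the fact that the slant product of a degenerate simplex along $[\Delta^1]$ vanishes, gives $f/[\Delta^1]\equiv f'/[\Delta^1]\pmod{\im\delta}$.

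The very same $2$-simplex computation yields functoriality. The identity of $x$ in the fundamental groupoid is the degenerate edge $s_0 x$, and $(s_0 x)/[\Delta^1]=0$ because $(s^0)_*[\Delta^1]$ is a degenerate $1$-chain, so $\gamma_X(\id_x)$ is the zero coset, which is the identity of $x$ in $\scrZ^n(X)$. For a composite $g\circ f$, represented by $d_1 H$ for a filler $H$ with $d_2 H=f$ and $d_0 H=g$, the Stokes identity $\delta(H/[\Delta^2])=\pm\bigl(g/[\Delta^1]-(g\circ f)/[\Delta^1]+f/[\Delta^1]\bigr)$ shows $(g\circ f)/[\Delta^1]\equiv f/[\Delta^1]+g/[\Delta^1]$, which is precisely $\gamma_X(g)\circ\gamma_X(f)$, since composition in $\scrZ^n(X)$ is addition of cochains (Example~\ref{examples-2}~(iii)).

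Finally, the strict $2$-monoidal statement is immediate from linearity. In $\scrZ^n_+(X)$ both $\varobar$ and $\varominus$ are addition in the simplicial abelian group $Z^n(\sing X\wedge\Delta^\bullet_+)$ (Example~\ref{examples-2}~(i)), and in $\scrZ^n(X)$ both products are again addition (Example~\ref{examples-2}~(ii)--(iii)); since $-/[\Delta^1]$ is additive, $\gamma_X(f_1+f_2)=\gamma_X(f_1)+\gamma_X(f_2)$ on morphisms, $\gamma_X$ is the identity on objects, and $\gamma_X(0)=0$, so $\gamma_X^\varobar=\gamma_X^\varominus$ are both strict with identity unit and product constraints. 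As both interchanges are the identity, the coherence square \eqref{axiom-2-mon-functor} consists entirely of identities and commutes trivially; naturality in $X$ follows from the naturality of the slant product under pullback. I expect the only genuine work to be the simplicial sign bookkeeping in the $2$-simplex Stokes computation---tracking $\partial[\Delta^2]$, the identification of the slant product of $H$ along the $i$-th face of $\Delta^2$ with $(d_i H)/[\Delta^1]$, and the vanishing on degenerate simplices---so that the composition relation emerges as a genuine \emph{sum} with the correct orientation.
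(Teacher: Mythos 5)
Your proposal is correct and follows essentially the same route as the paper: functoriality and independence of the representing path are both extracted from the Stokes formula \eqref{stokes-slant} applied to a $2$-simplex in $Z^n(\sing X\wedge\Delta^2_+)$ (witnessing the composition, respectively the homotopy rel endpoints), and strict $2$-monoidality follows from linearity of the slant product. The extra checks you include (that $\delta(f/[\Delta^1])=\pm(d_1f-d_0f)$ so the image is genuinely a morphism $d_1f\to d_0f$, and that degenerate edges go to zero) are left implicit in the paper but are consistent with its argument.
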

\begin{proof}
In the fundamental groupoid, a composition $f\circ g = h$ is `witnessed' by a $2$-simplex $\sigma \in Z^n(\sing X \wedge \Delta^2_+)$, meaning $\partial \sigma = g - h + f$. Hence \eqref{stokes-slant} implies that
\[
	(-1)^n\delta(\sigma/[\Delta^2]) = \sigma/\partial [\Delta^2] = g/[\Delta^1] - h/[\Delta^1] + f/[\Delta^1]
\]
is a coboundary, which proves that $\gamma_X$ is a functor. To show that $\gamma_X$ is well-defined, let $\sigma$ be a homotopy from $d_0\sigma = f$ to $d_1 \sigma = f'$ with $d_2\sigma = 0$. Then $(-1)^n\delta(\sigma / [\Delta^2]) = f/[\Delta^1] - f'/[\Delta^1] + 0$ exhibits the required coboundary. Since taking slant products is linear, $\gamma_X$ is strict $2$-monoidal.
\end{proof}

Combining Lemmas~\ref{lem:alpha}, \ref{lem:beta}, \ref{lem:gamma}, we define $\ch_X$ to be the composite  $2$-monoidal functor $\gamma_X\beta_X\alpha_X$. Explicitly, $\ch_X$ is given on objects and morphisms as follows:
\[
	\ch_X\colon \Map(X, E_n) \rightarrow \scrZ^n(X),\quad
	\begin{cases}
	\text{objects $f$:}& \ch(f)=f^*\iota_n,\\
	\text{morphisms $H\colon f\simeq g$:} & \ch(H) = H^*\iota_n / [\Delta^1].
	\end{cases}
\]
In particular, $\ch_X$ recovers \eqref{classical-chern} on isomorphism classes of objects. $\alpha_X, \beta_X, \gamma_X$ are natural in $X$, so this holds for $\ch_X$, too. This completes the proof.\qed

\section{Application to Differential Cohomology}\label{sec:application}

We begin by unravelling parts of Theorem~\ref{thm-refined-chern} into more elementary form. As shown in \cite{joyal1993braided}, there is an equivalence $\TwoMon \rightarrow \MonCat_\mathrm{braid}$ to braided monoidal categories. Hence we regard $\Map(X,E_n)$ as having just a single monoidal structure $\varobar$ and a natural braid (given by Theorem~\ref{prop:joyal-street}) and the functors $\ch_X$ from Theorem~\ref{thm-refined-chern} as having a natural braided monoidal structure $s$.\medskip


Fix the standard homotopies showing that ${\pi_0\Omega^2 E_{n+2}}$ is an abelian group (so the associator ${a=a_{\pr_1, \pr_2, \pr_3}}$ in ${\Map(E_n^{\times 3}, E_n)}$, braid ${s=s_{\pr_1, \pr_2}}$ in ${\Map(E_n^{\times 2}, E_n)}$, and unit constraint ${r=r_\id}$ in ${\Map(E_n,E_n)}$):
\begin{align*}
a: E_n^{\times 3}\times I &\rightarrow E_n, && \alpha_n\circ (\alpha_n\times \id)\simeq \alpha_n\circ(\id\times \alpha_n),\\
s: E_n^{\times 2}\times I &\rightarrow E_n, && \alpha_n\circ\mathrm{flip} \simeq \alpha_n,\\
r: E_n\times I &\rightarrow E_n, && \alpha_n\circ (\id,\const) \simeq \id.
\end{align*}
The monoidal structure $\varobar$ was induced by horizontal concatenation of loops:
\[
	\alpha_n\colon E_n\times E_n \approx \Omega^2 E_{n+2} \times \Omega^2 E_{n+2} \rightarrow \Omega^2 E_{n+2} \approx E_n.
\]
Then (either by direct inspection or using the naturality in $X$), the associativity and unit contraints $a,r$ as well as the braid $s$ on the categories $\Map(X,E_n)$ are given by post-composition with the above homotopies.


\begin{theorem}\label{theorem-main-appl}
There exist reduced cochains $A_n \in C^{n-1}(E_n\times E_n;V)$ satisfying
\begin{equation}\label{eqn:choice-an}
	\delta A_n = \pr_1^*\iota_n + \pr_2^*\iota_n - \alpha_n^*\iota_n
\end{equation}
and coherent in the sense that \textup(`$\equiv$' means `up to coboundary'\textup)
\begin{align*}
\pr_{12}^*A_n + (\alpha_n\times 1)^*A_n &\equiv
\pr_{23}^*A_n+(1\times\alpha_n)^*A_n + \ch(a), &&\text{associative}\\
\mathrm{flip}^* A_n &\equiv A_n + \ch(s), &&\text{commutative}\\
(\id_{E_n},\const)^* A_n &\equiv \ch(r). &&\text{unit}
\end{align*}
\textup(Recall that $\ch(h)=h^*\iota_n / [0,1] = \smallint_0^1 h^*\iota_n$ for morphisms/homotopies $h$.\textup)
\end{theorem}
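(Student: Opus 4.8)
The plan is to read off $A_n$ and all three coherence relations directly from the braided monoidal functor $\ch_X$ produced by Theorem~\ref{thm-refined-chern}, evaluated on the universal configurations of projection maps. First I would define $A_n$ from the \emph{universal} instance of the monoidal structure of $\ch_X$. Take $X=E_n\times E_n$ with the two projections $\pr_1,\pr_2\colon E_n\times E_n\rightarrow E_n$, viewed as objects of $\Map(E_n\times E_n,E_n)$. Since $\varobar$ is induced by $\alpha_n$, we have $\pr_1\varobar\pr_2=\alpha_n$, so the monoidal structure of $\ch_{E_n\times E_n}$ supplies a morphism
\[
	\ch(\pr_1)+\ch(\pr_2)\longrightarrow \ch(\pr_1\varobar\pr_2)
\]
in $\scrZ^n(E_n\times E_n)$, that is, using $\ch(\pr_i)=\pr_i^*\iota_n$ and $\ch(\pr_1\varobar\pr_2)=\alpha_n^*\iota_n$, a morphism $\pr_1^*\iota_n+\pr_2^*\iota_n\rightarrow \alpha_n^*\iota_n$. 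By the description of morphisms in $\scrZ^n$ from Examples~\ref{examples-2}~(iii), such a morphism is represented by a reduced cochain $A_n\in C^{n-1}(E_n\times E_n;V)$ with $\delta A_n=\pr_1^*\iota_n+\pr_2^*\iota_n-\alpha_n^*\iota_n$, which is exactly~\eqref{eqn:choice-an}.

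The second step is naturality in $X$. Since $\ch$ is a natural family of braided monoidal functors, its structure maps are natural, so applying naturality to $(f,g)\colon X\rightarrow E_n\times E_n$ identifies the structure map of $\ch_X$ at a pair $f,g\colon X\rightarrow E_n$ with the pullback $(f,g)^*A_n$. Thus every instance of the monoidal structure of $\ch$ is a pullback of the single universal cochain $A_n$, and the coherence relations become statements about pullbacks of one cochain rather than about unrelated cochains.

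The third step is to read off the monoidal and braided functor axioms for $\ch_X$ in the target $\scrZ^n$. Crucially, in $\scrZ^n$ both composition and both factors of the monoidal product are given by addition, so on morphisms everything reduces to addition of cochains modulo $\im(\delta)$; moreover the associator, unitors, and symmetry of $\scrZ^n$ are all identities, so the target-side coherence data vanish. Each functor axiom therefore becomes a cochain identity modulo $\im(\delta)$ whose only nontrivial contributions are the structure cochains $(f,g)^*A_n$ and the images under $\ch$ of the coherence morphisms of $\Map$. Associativity coherence on $X=E_n^{\times 3}$ with $\pr_1,\pr_2,\pr_3$ produces the four terms $\pr_{12}^*A_n,\ (\alpha_n\times 1)^*A_n,\ \pr_{23}^*A_n,\ (1\times\alpha_n)^*A_n$ together with the image $\ch(a)$ of the associator, giving the associative relation. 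Braiding compatibility on $X=E_n\times E_n$ with $\pr_1,\pr_2$ yields $A_n$, $\mathrm{flip}^*A_n$ and $\ch(s)$, giving the commutative relation. Unit coherence on $X=E_n$ with $\id_{E_n}$ and $\const$ (and the identity unit constraint of the $2$-monoidal functor) yields $(\id_{E_n},\const)^*A_n$ and $\ch(r)$, giving the unit relation.

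The main obstacle is the universality/naturality step: one must verify that the structure isomorphism $\ch^\varobar$, assembled through $\alpha_X,\beta_X,\gamma_X$, is genuinely natural in $X$, so that it descends from the one cochain $A_n$ on $E_n\times E_n$. This is what reduces the three coherence conditions to instances of the functor axioms at the projections. The remaining work is bookkeeping: tracking the sign $(-1)^n$ in the slant product via Stokes' formula~\eqref{stokes-slant}, and fixing the direction in which $\ch(a)$ enters (the associator is invertible in the groupoid, so this ambiguity is only a sign). I expect no genuinely new difficulty beyond translating the abstract axioms into this strict, additive, symmetric target.
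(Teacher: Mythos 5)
Your proposal is correct and follows essentially the same route as the paper: the paper likewise sets $A_n=\ch^\varobar_{\pr_1,\pr_2}$ as the universal structure cochain of the monoidal functor $\ch_{E_n\times E_n}$, uses naturality \eqref{eqn:nat-of-ch-an} to express all other structure maps as pullbacks $(f,g)^*A_n$, and extracts the three relations from the associativity, unit, and braiding coherence diagrams evaluated at the projections. The only discrepancy is an immaterial sign convention in the direction of the structure morphism, which the paper itself is loose about between \eqref{eqn:how-An-arises} and \eqref{eqn:choice-an}.
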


\begin{proof}
The data of a monoidal functor
\[
	\ch_X: (\mathrm{Map}(X,E_n),\varobar) \rightarrow (\scrZ^n(X),+).
\]
includes morphisms relating $\varobar,+$; that is, elements $\ch^\varobar_{c,d} \in C^{n-1}(X) / \im(\delta)$ with
\begin{equation}\label{eqn:how-An-arises}
	\delta\ch^\varobar_{c,d} = \ch(f\varobar g)-\ch(f)-\ch(g).
\end{equation}
Naturality gives commutative diagrams of braided monoidal functors
\[\xymatrix@C=1cm{
	(\mathrm{Map}(X,E_n),\varobar)\ar[r]^-\ch&(\scrZ^n(X),+)\\
	(\mathrm{Map}(Y,E_n),\varobar)\ar[u]^{\Map(f,E_n)}\ar[r]_-\ch&(\scrZ^n(Y),+)\ar[u]_{\scrZ^n(f)}
}\]
which means
\begin{equation}\label{eqn:nat-of-ch-an}
	f^*\ch^\varobar_{c,d} = \ch^\varobar_{f^*c, f^*d}\quad\text{in}\quad C^{n-1}(X)/\im(\delta),\qquad c,d: Y\rightarrow E_n.
\end{equation}
A braided monoidal functor has to satisfy various coherence conditions:
\[
\xymatrix{
\ch( (f\varobar g)\varobar h)\ar[d]_{\ch(a)}\ar[r]^{\ch_{f\varobar g, h}} & \ch(f\varobar g)+\ch(h)\ar[r]^-{\ch_{f,g}} & (\ch(f)+\ch(g))+\ch(h)\ar@{=}[d]\\
\ch( f\varobar (g\varobar h))\ar[r]_{\ch_{f,g\varobar h}} & \ch(f)+\ch(g\varobar h)\ar[r]_-{\ch_{g,h}} & \ch(f)+(\ch(g)+\ch(h)).
}\]
\[
\begin{aligned}
&\xymatrix{
\ch(f)+\ch(\const)\ar@{=}[r]&\ch(f)+0\ar@{=}[d]\\
\ch(f\varobar\const)\ar[u]^{\ch^\varobar_{f,\const}}\ar[r]_{\ch(r)}& \ch(f)
}&
\xymatrix{
	\ch(f)+\ch(g)\ar@{=}[r]&\ch(g)+\ch(f)\\
	\ch(f\varobar g)\ar[u]^{\ch_{f,g}}\ar[r]_{\ch(s)} & \ch(g\varobar f)\ar[u]_{\ch_{g,f}}
}
\end{aligned}
\]
Set $c=\pr_1, d=\pr_2: E_n\times E_n\rightarrow E_n$ in \eqref{eqn:how-An-arises} to define
\begin{equation}\label{eqn:inproof-def-an}
	A_n = \ch^\varobar_{\pr_1, \pr_2}.
\end{equation}
With this notation, the commutativity of the first coherence diagram reads
\begin{equation}\label{eqn:explicit-assoc-diag-an}
	\ch^\varobar_{g,h} + \ch^\varobar_{f,g\varobar h} +\ch(a) \equiv \ch^\varobar_{f,g} + \ch^\varobar_{f\varobar g,h}\quad\text{in}\quad C^{n-1}(X)/\im(\delta).
\end{equation}
If we set $f=\pr_1, g=\pr_2, h=\pr_3: E_n\times E_n\times E_n \rightarrow E_n$, naturality \eqref{eqn:nat-of-ch-an} asserts
\begin{align*}
	f=(1\times\alpha_n)^*\pr_1, g\varobar h = (1\times \alpha_n)^*\pr_2 &\Rightarrow \ch^\varobar_{f,g\varobar h} \equiv (1\times \alpha_n)^*\ch^\varobar_{\pr_1,\pr_2},\\
	g=\pr_{23}^*\pr_1, h=\pr_{23}^*\pr_2 &\Rightarrow \ch_{g,h}^\varobar \equiv \pr_{23}^*\ch^\varobar_{\pr_1,\pr_2},\\
	f\varobar g=(\alpha_n\times 1)^*\pr_1, h=(\alpha\times 1)^*\pr_2 &\Rightarrow \ch^\varobar_{f\varobar g, h} \equiv (\alpha_n\times 1)^*\ch^\varobar_{\pr_1,\pr_2},\\
	f=\pr_{12}^*\pr_1, g=\pr_{12}^*\pr_2 &\Rightarrow \ch_{f,g}^\varobar \equiv \pr_{12}^*\ch^\varobar_{\pr_1,\pr_2}.
\end{align*}
Inserting these equalities and \eqref{eqn:inproof-def-an} into \eqref{eqn:explicit-assoc-diag-an} gives
\[
	\pr_{23}^*A_n + (1\times\alpha_n)^* A_n + \ch(a) \equiv \pr_{12}^*A_n + (\alpha_n\times1)^*A_n
\]
Similarly, the second coherence diagram for $f=\pr_1$ asserts
\[
\ch(r)\equiv \ch^\varobar_{\pr_1, \const} \overset{\eqref{eqn:nat-of-ch-an}}{\equiv}(\id,\const)^*\ch_{\pr_1,\pr_2}=(\id,\const)^*A_n
\]
The third diagram for $f=\pr_1, g=\pr_2$ says, using naturality \eqref{eqn:nat-of-ch-an} for $\pr_2 = \mathrm{flip}^*\pr_1, \pr_1 = \mathrm{flip}^*\pr_2$:
\[
	\mathrm{flip}^*A_n + \ch(s)  \equiv \mathrm{flip}^*\ch_{f,g} + \ch(s)\equiv\ch_{g,f} + \ch(s) \equiv \ch_{f,g} = A_n\qedhere
\]
\end{proof}

Theorem~\ref{theorem-main-appl} contains exactly the coherence conditions needed to prove that \eqref{add-dif-cocycles} gives an abelian group structure. The key observation is (see~\cite[3.10,~3.13]{dr-arbeit}):

\begin{proposition}\label{additioncentral}
\textup{(i)} Given a homotopy ${C\colon c_0 \simeq c_1}$ of maps, a form ${\omega \in \Omega^n_\mathrm{cl}(M;V)}$, and cochain $h \in C^{n-1}(M; V)$ with $\delta h = \omega - c_0^*\iota_n$, we have an equivalence
\[
	(c_0, \omega, h) \sim (c_1,\omega,h-\ch(C)).
\]
\textup{(ii)} For a cocycle $(c,\omega,h)$ and $g \in C^{n-2}(M;V)$ we have $(c, \omega, h) \sim (c, \omega, h+\delta g)$.
\end{proposition}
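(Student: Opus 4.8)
The plan is to prove both equivalences by exhibiting explicit differential $n$-cocycles on $M\times[0,1]$ whose boundary restrictions to $M\times\{0\}$ and $M\times\{1\}$ are, respectively, the two cocycles in question; this is exactly what the definition of the equivalence relation demands. Throughout I write $\pi\colon M\times[0,1]\to M$ for the projection and $j_0,j_1\colon M\to M\times[0,1]$ for the boundary inclusions, so $\pi j_i=\id_M$, and I recall that $\ch(h)=h^*\iota_n/[0,1]$ for a homotopy $h$.

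For part (i) the crucial preliminary is that Stokes' formula \eqref{stokes-slant}, applied to $u=C^*\iota_n$ and $e=[0,1]$, computes $\delta\,\ch(C)$: since $\delta C^*\iota_n=C^*\delta\iota_n=0$ and $\partial[0,1]=\{1\}-\{0\}$, the slant products against the two endpoints recover $c_1^*\iota_n$ and $c_0^*\iota_n$, giving $\delta\,\ch(C)=c_1^*\iota_n-c_0^*\iota_n$ once orientations are fixed. This simultaneously verifies that the target triple is a cocycle, i.e.\ $\delta(h-\ch(C))=\omega-c_1^*\iota_n$. I would then set $\hat c=C$ and $\hat\omega=\pi^*\omega$ (closed, restricting to $\omega$ at either end) and seek $\hat h$ with $\delta\hat h=\pi^*\omega-C^*\iota_n$. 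Writing $\hat h=\pi^*h-\theta$, the cocycle equation forces $\delta\theta=C^*\iota_n-(c_0\pi)^*\iota_n$, and I would obtain such a $\theta$ as the fibre integral $\theta=H^*\iota_n/[0,1]_t$ of the reparametrised homotopy $H(m,s,t)=C(m,st)$ on $(M\times[0,1]_s)\times[0,1]_t$, which interpolates from $c_0\pi$ at $t=0$ to $C$ at $t=1$. The chain-homotopy (prism) form of Stokes gives the required $\delta\theta$, while naturality of fibre integration in the base yields $j_1^*\theta=C^*\iota_n/[0,1]=\ch(C)$ and $j_0^*\theta=0$ (the latter because $H$ is constant in $t$ along $s=0$, so the integrand is pulled back from $M$). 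Hence $j_0^*\hat h=h$ and $j_1^*\hat h=h-\ch(C)$, as required.

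For part (ii) I would first note $\delta(h+\delta g)=\delta h=\omega-c^*\iota_n$, so $(c,\omega,h+\delta g)$ is again a cocycle. For the bounding cocycle take the constant homotopy $\hat c=c\pi$, $\hat\omega=\pi^*\omega$, and $\hat h=\pi^*h+\delta\zeta$ with $\zeta=\rho\smile\pi^*g\in C^{n-2}(M\times[0,1];V)$, where $\rho\in C^0(M\times[0,1])$ is the pullback of a $0$-cochain on $[0,1]$ with $j_0^*\rho=0$ and $j_1^*\rho=1$. Since $\pi^*h$ already solves $\delta\pi^*h=\pi^*\omega-(c\pi)^*\iota_n=\hat\omega-\hat c^*\iota_n$ and $\delta\delta\zeta=0$, the triple $(\hat c,\hat\omega,\hat h)$ is a differential cocycle; naturality of the cup product gives $j_i^*\zeta=(j_i^*\rho)\smile g$, whence $j_0^*\hat h=h+\delta 0=h$ and $j_1^*\hat h=h+\delta g$.

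The genuinely delicate point is part (i): the boundary data must be matched at the level of cochains, not merely up to coboundary, so the construction of $\theta$ and the exact evaluation of $j_0^*\theta$ and $j_1^*\theta$ — together with tracking the sign in \eqref{stokes-slant}, which is what pins down $h-\ch(C)$ rather than $h+\ch(C)$ — is where the real work lies. Part (ii), as well as the routine check that each triple satisfies $\delta\hat h=\hat\omega-\hat c^*\iota_n$, then presents no difficulty.
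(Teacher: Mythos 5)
The paper offers no proof of Proposition~\ref{additioncentral} itself, deferring to \cite[3.10,~3.13]{dr-arbeit}, and your argument is precisely the standard one: exhibit bounding differential cocycles on $M\times[0,1]$, namely $(C,\pi^*\omega,\pi^*h-\theta)$ with $\theta$ the slant product along $[0,1]_t$ of $H^*\iota_n$ for the reparametrised homotopy $H(m,s,t)=C(m,st)$ in part (i), and $(c\pi,\pi^*\omega,\pi^*h+\delta(\rho\smile\pi^*g))$ in part (ii). The only items you leave open are exactly the routine ones: the sign $(-1)^{|u|+|e|}$ in \eqref{stokes-slant} must be chased once to confirm that the stated convention produces $h-\ch(C)$, and the identity $j_0^*\theta=0$ rests on the fact that the slant product against $[\Delta^1]$ of a cochain pulled back from $M$ vanishes, since by naturality of $EZ$ the relevant chain $(\pr_M)_*EZ(d\otimes[\Delta^1])$ is degenerate --- so one should take $\iota_n$ (hence $\theta$) in the normalized cochain complex, which is harmless. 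With those two points supplied the proof is complete and agrees with the cited one.
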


\subsection{Proof of Theorem~\ref{mainappladd}}\label{proof-thm2}
Applying part (i) to the homotopies $a, r, s$ above and then part (ii) to the coherence equations in Theorem~\ref{theorem-main-appl} shows that \eqref{add-dif-cocycles} descends to an associative, unital, and commutative operation on equivalence classes.

It remains to show that we have inverses. Pick maps $\nu_n\colon E_n\rightarrow E_n$ representing negation in $E$-cohomology and a homotopy $h\colon \id_n\varobar \nu_n = \alpha_n(\id_n,\nu_n) \simeq \const$. For
\[
	N_n = \ch(h) - (\id,\nu_n)^*A_n
\]
we have $\delta N_n = -\iota_n - \nu_n\iota_n$. Applying Proposition~\ref{additioncentral} to the homotopy $h$ then shows that $(c,\omega,h)+(\nu_n \circ c, -\omega, -h + c^*N_n)$ is equivalent to zero.
\qed

\appendix
\section{}\label{appendix}

From \cite{kelly1974doctrinal} (or see \cite[Appendix~A]{dr-arbeit}) we recall the following well-known fact:

\begin{theorem}[Doctrinal Adjunction]\label{doctrinal}
Suppose $(F,G,\varepsilon, \eta)$ is an adjoint equivalence in which $F\colon\scrC \rightarrow \scrD$ is a monoidal functor. Then there exists a unique monoidal structure on $G$ that makes $(F,G,\varepsilon, \eta)$ a monoidal adjoint equivalence.
\end{theorem}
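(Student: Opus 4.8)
The plan is to transport the monoidal structure across the adjoint equivalence by the \emph{mate} construction, exploiting the two features that an adjoint equivalence gives us for free: $F$ is fully faithful, and all the $2$-cells in sight are invertible. Write $F_{A,B}\colon FA\otimes FB \to F(A\otimes B)$ and $F_0\colon I\to FI$ for the structure maps of $F$; since $F$ is a monoidal equivalence these are isomorphisms, and in the adjoint equivalence $\varepsilon$ and $\eta$ are natural isomorphisms as well.

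First I would \emph{define} the structure on $G$. Because $F$ is fully faithful, I declare $G_{X,Y}\colon GX\otimes GY \to G(X\otimes Y)$ to be the unique morphism with
\[
 F(G_{X,Y}) = \varepsilon_{X\otimes Y}^{-1}\circ(\varepsilon_X\otimes\varepsilon_Y)\circ F_{GX,GY}^{-1},
\]
and likewise let $G_0\colon I\to GI$ be the unique morphism with $F(G_0)=\varepsilon_I^{-1}\circ F_0^{-1}$; equivalently, $G_{X,Y}$ is the adjoint transpose of $(\varepsilon_X\otimes\varepsilon_Y)\circ F_{GX,GY}^{-1}$ under $F\dashv G$. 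Each factor on the right is an isomorphism and $F$ reflects isomorphisms, so every $G_{X,Y}$ and $G_0$ is invertible and $G$ will be strong monoidal.

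Next I would check the two substantive properties. \textbf{(Coherence of $G$.)} Since $F$ is faithful, each associativity pentagon and unit triangle for $(G,G_{X,Y},G_0)$ may be verified after applying $F$; under $F$ the constraints of $G$ unravel, via the displayed formula, into the constraints of $F$ conjugated by components of $\varepsilon$, so naturality of $\varepsilon$ together with the coherence already known for the monoidal functor $F$ turns each coherence square for $G$ into the corresponding one for $F$. \textbf{($\varepsilon$ and $\eta$ monoidal.)} The defining formula is exactly the statement that the tensor-compatibility square for $\varepsilon\colon FG\Rightarrow\id_{\scrD}$ commutes, and the $G_0$ clause gives the unit square, so $\varepsilon$ is a monoidal natural transformation by construction; monoidality of $\eta\colon\id_{\scrC}\Rightarrow GF$ then follows from the triangle identities, since the monoidality square for $\eta$ is the mate of the (now established) monoidality square for $\varepsilon$. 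Hence $(F,G,\varepsilon,\eta)$ is a monoidal adjoint equivalence.

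Finally, uniqueness is forced rather than chosen. If any monoidal structure on $G$ makes $\varepsilon$ monoidal, its tensor-compatibility square reads $\varepsilon_{X\otimes Y}\circ F(G_{X,Y})\circ F_{GX,GY}=\varepsilon_X\otimes\varepsilon_Y$; as $\varepsilon_{X\otimes Y}$ and $F_{GX,GY}$ are invertible and $F$ is faithful, this equation determines $G_{X,Y}$ uniquely, and similarly $G_0$, so the structure defined above is the only one possible. The step I expect to be the main obstacle is the coherence verification for $G$, but full-faithfulness of the equivalence confines it to a bounded diagram chase that merely re-uses the coherence of $F$ and the naturality of $\varepsilon$.
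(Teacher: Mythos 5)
The paper gives no proof of Theorem~\ref{doctrinal}; it is recalled verbatim from \cite{kelly1974doctrinal}, so there is nothing in-text to compare against. Your argument is a correct, self-contained proof and is essentially Kelly's: you define $G_{X,Y}$ and $G_0$ as the adjoint transposes (mates) of $F$'s constraints, observe that the defining equation is literally the monoidality square for $\varepsilon$ (which, since $\varepsilon_{X\otimes Y}$ and $F_{GX,GY}$ are invertible and $F$ is faithful, also yields uniqueness), and obtain monoidality of $\eta$ from the zig-zag identities --- I checked this last step and it does reduce to $\varepsilon_{FA}\circ F\eta_A=\id$ after two applications of naturality. Two remarks. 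First, your formula $F(G_{X,Y})=\varepsilon_{X\otimes Y}^{-1}\circ(\varepsilon_X\otimes\varepsilon_Y)\circ F_{GX,GY}^{-1}$ presupposes that the coherence maps of $F$ are invertible; this is \emph{not} automatic for a lax monoidal functor whose underlying functor is an equivalence, but it is the convention in force in this paper (all monoidal functors here are strong), and it is in any case forced by the conclusion: monoidality of $\varepsilon$ makes each $F_{GX,GY}$ a split monomorphism and monoidality of $\eta$ makes each $F_{A,B}$ a split epimorphism, so the hypothesis costs nothing. Second, the hexagon and triangle coherence for $G$, which you defer as a bounded diagram chase, is the only real content beyond the definitions; a slightly slicker packaging, consonant with how the paper actually uses the result in Lemma~\ref{prop:lift-of-mon-functors-section}, is to transport the trivial monoidal structure of $\id_{\scrD}$ along the natural isomorphism $\varepsilon$ to obtain a coherent monoidal structure on $FG$, and then divide by $F$ using faithfulness and the invertibility of its constraints. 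Either way the chase closes, so your proof stands.
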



\begin{lemma}\label{prop:lift-of-mon-functors-section}
Suppose $G\circ F = H$ are functors, where $F,H$ are monoidal and $F$ is an equivalence. There exists a unique monoidal structure on $G$ so that $G\circ F = H$ as monoidal functors. \textup(similarly, if $G$ is an equivalence, $G,H$ monoidal.\textup)
\end{lemma}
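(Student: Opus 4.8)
The plan is to obtain existence from the Doctrinal Adjunction (Theorem~\ref{doctrinal}) and uniqueness from the essential surjectivity of $F$. Throughout write $\otimes$ for the monoidal products and $F^\otimes_{A,B}$, $H^\otimes_{A,B}$ for the (invertible) structure constraints of the given monoidal functors $F\colon\scrC\to\scrD$ and $H=G\circ F\colon\scrC\to\scrE$. First I would promote the equivalence $F$ to an adjoint equivalence $(F,F',\eta,\varepsilon)$ with quasi-inverse $F'\colon\scrD\to\scrC$ and counit $\varepsilon\colon FF'\Rightarrow\id_\scrD$ a natural isomorphism. Doctrinal Adjunction then equips $F'$ with a unique monoidal structure making $(F,F',\eta,\varepsilon)$ a monoidal adjoint equivalence, so the composite $H\circ F'\colon\scrD\to\scrE$ is monoidal. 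Since $G\circ F=H$ as plain functors we have $GFF'=HF'$, and whiskering $\varepsilon$ yields a natural isomorphism $G\varepsilon\colon HF'\Rightarrow G$. I would then transport the monoidal structure of $HF'$ across $G\varepsilon$, defining
\[
	G^\otimes_{X,Y} = G(\varepsilon_{X\otimes Y}) \circ (HF')^\otimes_{X,Y} \circ \big( G(\varepsilon_X)^{-1} \otimes G(\varepsilon_Y)^{-1} \big)
\]
together with the analogous unit constraint; that the transport of a monoidal structure along a natural isomorphism is again monoidal is routine coherence bookkeeping.

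The substantive step is to verify that, with this structure, $G\circ F=H$ holds as \emph{monoidal} functors, i.e.\ that $G(F^\otimes_{A,B})\circ G^\otimes_{FA,FB}=H^\otimes_{A,B}$ (and similarly on units). I expect this to be the main obstacle, since it is the only place where the monoidal coherence of the adjunction is genuinely used rather than formal transport. Substituting $X=FA$, $Y=FB$ into the display, expanding $(HF')^\otimes$ as the composite of the constraints of $H$ and $(F')^\otimes$, and invoking a triangle identity together with the compatibility of $(F')^\otimes$ with $F^\otimes$ supplied by Doctrinal Adjunction should collapse all the transport isomorphisms, leaving exactly $H^\otimes_{A,B}$; the unit case is analogous.

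For uniqueness, I would observe that the equation $G\circ F=H$ forces any compatible monoidal structure on $G$ to take the determined value $G^\otimes_{FA,FB}=G(F^\otimes_{A,B})^{-1}\circ H^\otimes_{A,B}$ on every pair of objects in the image of $F$. Since $F$ is essentially surjective, an arbitrary pair $(X,Y)$ admits isomorphisms $X\cong FA$, $Y\cong FB$; the naturality square of any monoidal structure for these isomorphisms then expresses $G^\otimes_{X,Y}$ in terms of $G^\otimes_{FA,FB}$, so two compatible structures must agree everywhere (and likewise on units). Finally, the parenthetical dual statement—where $G$ is the equivalence and one seeks the structure on $F$—is handled by the identical argument using a monoidal quasi-inverse $G'$ of $G$ from Theorem~\ref{doctrinal} and the natural isomorphism $F\cong G'H$.
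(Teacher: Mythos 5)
Your proposal is correct and follows essentially the same route as the paper: uniqueness from essential surjectivity plus naturality, and existence by using Doctrinal Adjunction to make the quasi-inverse monoidal and then transporting the monoidal structure of $H\circ F'$ to $G$ along the whiskered counit $G\varepsilon$. The only difference is presentational: where you propose to verify $G\circ F=H$ by a direct expansion of the constraints, the paper packages the same computation as the statement that $H\eta$ and $G\varepsilon F$ are monoidal transformations composing to the identity via the zig-zag identities.
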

\begin{proof}
If a monoidal structure on $G$ exists, we must have
$
	H_{C_1, C_2} = G(F_{C_1,C_2})\circ G_{FC_1, FC_2}$ and $H_1 = G(F_1)\circ G_1$. Hence $G_{E_1,E_2}$ is determined on the image of $F$. For general $E_1, E_2$ pick isomorphisms $\varphi_i\colon E_i\rightarrow FC_i$. Naturality gives $G_{FC_1, FC_2}\circ (G\varphi_1\otimes G\varphi_2) = G(\varphi_1\otimes\varphi_2) \circ G_{E_1,E_2}$, so $G_{E_1, E_2}$ is uniquely determined. To prove existence, place $F$ in an adjoint equivalence $(F,R,\varepsilon,\eta)$ which, by doctrinal adjunction, may viewed as a monoidal adjunction. We have a natural isomorphism
$
	G\varepsilon\colon HR=GFR \xlongrightarrow{\cong} G.
$
There is a unique monoidal structure on $G$ making $G\varepsilon$ a monoidal transformation (monoidal structures on functors may be uniquely transported along natural isomorphisms). The composition of a monoidal transformation with a monoidal functor is again monoidal. Therefore $H\eta\colon H \rightarrow HRF$, $G\varepsilon F\colon HRF\rightarrow GF$ are monoidal transformations which compose to the identity, by the zig-zag identities for $(F,R,\varepsilon,\eta)$. But this just means $H=GF$ as monoidal functors.
\end{proof}

\subsection{Proof of Proposition~\ref{prop:equivalent-functors}}

We prove only (i) since (ii) follows by a dual argument. Placing $F$ in an adjoint equivalence $(F,R,\varepsilon,\eta)$, Lemma~\ref{prop:lift-of-mon-functors-section} gives us two monoidal structures on $G$ satisfying ${G^\varobar \circ F^\varobar = H^\varobar}$ and ${G^\varominus \circ F^\varominus = H^\varominus}$.
Since $H, F$ preserve $\zeta$, we have (omitting the vertical maps in \eqref{axiom-2-mon-functor} from the notation)
\[
	\zeta_{HA_1,HA_2,HA_3,HA_4}^\scrE = H\zeta_{A_1,A_2,A_3,A_4}^\scrC = GF(\zeta_{A_1,A_2,A_3,A_4}^\scrC) = G(\zeta_{FA_1,FA_2,FA_3,FA_4}^\scrD).
\]
Hence $G$ preserves the interchange on the image of $F$. In general, we may pick isomorphisms $X_i \cong FA_i$ ($F$ is essentially surjective). Naturality of the interchange then implies that $G$ preserves $\zeta_{X_1,X_2,X_3,X_4}^\scrD$. Hence $G$ is $2$-monoidal.\qed\newline

We will use the following well-known fact (see \cite[Appendix~A]{dr-arbeit}):

\begin{proposition}
Let $(F,G,\varepsilon,\eta)$ be an adjoint equivalence from $\scrC$ to $\scrD$ and assume that $\scrD$ is a monoidal category. Then there exists a monoidal structure on $\scrC$ making $(F,G,\varepsilon,\eta)$ a monoidal adjoint equivalence.\\
\textup{
(for example, one can set $C_1\otimes C_2 = G(FC_1\otimes FC_2)$, $I_\scrC = GI_\scrD$.)
}
\end{proposition}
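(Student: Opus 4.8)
The plan is to construct the monoidal structure on $\scrC$ by transporting the one on $\scrD$ across the equivalence, using the quasi-inverse $G$ exactly as the parenthetical hint suggests, and then to invoke Doctrinal Adjunction (Theorem~\ref{doctrinal}) to produce the monoidal structure on $G$ and conclude that $(F,G,\varepsilon,\eta)$ is a monoidal adjoint equivalence. Concretely, I would set $C_1\otimes C_2 = G(FC_1\otimes FC_2)$ on objects, $f_1\otimes f_2 = G(Ff_1\otimes Ff_2)$ on morphisms, and $I_\scrC = G(I_\scrD)$. Functoriality of $\otimes\colon\scrC\times\scrC\to\scrC$ is then immediate, since it is the composite $G\circ(\otimes_\scrD)\circ(F\times F)$.

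The observation that makes the coherence axioms painless is that $F$ is fully faithful. First I would record the invertible constraints for $F$: since $F(C_1\otimes C_2)=FG(FC_1\otimes FC_2)$, the counit gives a natural isomorphism $\varepsilon_{FC_1\otimes FC_2}\colon F(C_1\otimes C_2)\xrightarrow{\cong} FC_1\otimes FC_2$, and likewise $\varepsilon_{I_\scrD}\colon F(I_\scrC)=FG(I_\scrD)\xrightarrow{\cong}I_\scrD$. I would then \emph{define} the associator and unit constraints of $\scrC$ to be the unique morphisms that $F$ carries, through these constraints, onto the associator and unit constraints of $\scrD$: because $F$ is fully faithful such morphisms exist and are unique, and because $F$ reflects isomorphisms they are isomorphisms.

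With the associator and unitors defined this way, the compatibility of $F$ with $\otimes$ holds by construction — it is precisely how $a^\scrC$ and the unitors were specified. The pentagon and triangle identities for $\scrC$ then reduce to those for $\scrD$: applying the faithful functor $F$ to a coherence diagram in $\scrC$ and rewriting through the naturality of the constraints $\varepsilon$ produces exactly the corresponding diagram in $\scrD$, conjugated by isomorphisms, which commutes; faithfulness of $F$ returns commutativity in $\scrC$. This shows $(\scrC,\otimes,I_\scrC)$ is a genuine monoidal category and that $(F,\varepsilon^{-1})$ is a strong monoidal functor.

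Finally, $(F,G,\varepsilon,\eta)$ is an adjoint equivalence whose left component $F$ is now monoidal, so Theorem~\ref{doctrinal} supplies a unique monoidal structure on $G$ making $(F,G,\varepsilon,\eta)$ a monoidal adjoint equivalence, which is the assertion. I expect the only (routine) obstacle to lie in the second and third steps: one must keep careful track of where the counit $\varepsilon$ enters the transported associator and verify its naturality before the fully-faithful reduction can be applied. Everything after that is formal.
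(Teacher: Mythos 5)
Your proof is correct and follows exactly the construction the paper indicates (the paper gives no written proof beyond the hint $C_1\otimes C_2 = G(FC_1\otimes FC_2)$, $I_\scrC = GI_\scrD$, deferring details to a reference): transport the structure along the equivalence, use full faithfulness of $F$ to define and verify the constraints, and finish with Doctrinal Adjunction to equip $G$. No gaps.
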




Applied to both monoidal structures of a $2$-monoidal category $\scrD$, we obtain two monoidal adjunctions $(F^\varobar, G^\varobar, \varepsilon, \eta)$ and $(F^\varominus, G^\varominus, \varepsilon, \eta)$. As \eqref{axiom-2-mon-functor} is a diagram of isomorphisms and $F$ is bijective on $\Hom$-sets, there is a unique interchange $\zeta^\scrC$ making $F$ a $2$-monoidal functor. The verification of the diagrams \eqref{compatiblity-zeta} expressing the compatibility of $\zeta^\scrC$ with associativity and unit constraints can, $F$ being faithful, be reduced to the corresponding properties of $\zeta^\scrD$.

\subsection{Proof of Proposition~\ref{prop:equivalent-stuff}}

Placing each of the equivalences $F_i\colon \scrC_i \rightarrow \scrD_i$ into an adjoint equivalence $(F_i, G_i, \varepsilon_i, \eta_i)$ \cite[Proposition~1.1.2]{leinster2004higher}, the preceeding remark gives $2$-monoidal structures $\hat{\scrC}_i$ on $\scrC_i$ and $\hat{F}_i$ on every $F_i$. It remains to define $2$-monoidal structures on every $\scrC(f)\colon \scrC_i \rightarrow \scrC_j$ for morphisms $f\colon i\rightarrow j$  in $\scrI$. The naturality of $\hat{F}_i$ means that
\[\xymatrix{
	\hat{\scrC}_i\ar[r]^{\scrC(f)}\ar[d]_{\hat{F_i}}&\hat{\scrC}_j\ar[d]^{\hat{F_j}}\\
	\scrD_i\ar[r]_{\scrD(f)}&\scrD_j
}\]
should commute as a diagram in $\TwoMon$. Proposition~\ref{prop:equivalent-functors} implies that there is a unique such $2$-monoidal structure on $\scrC(f)$. Uniqueness implies functoriality.\qed

\bibliography{references}

\begin{thebibliography}{10}

\bibitem{MR2724388}
Marcelo Aguiar and Swapneel Mahajan.
\newblock {\em Monoidal functors, species and {H}opf algebras}, volume~29 of
  {\em CRM Monograph Series}.
\newblock American Mathematical Society, Providence, RI, 2010.
\newblock With forewords by Kenneth Brown and Stephen Chase and Andr{\'e}
  Joyal.

\bibitem{Dold-Relations-Cohomology}
Albrecht Dold.
\newblock Relations between ordinary and extraordinary cohomology.
\newblock {\em Colloq. algebr. Topology, Aarhus}, pages 2--9, 1962.

\bibitem{Goerss-Jardine}
Paul~G. Goerss and John~F. Jardine.
\newblock {\em Simplicial homotopy theory}.
\newblock Modern Birkh\"auser Classics. Birkh\"auser Verlag, Basel, 2009.
\newblock Reprint of the 1999 edition [MR1711612].

\bibitem{Hopkins-Singer}
M.~J. Hopkins and I.~M. Singer.
\newblock Quadratic functions in geometry, topology, and {M}-theory.
\newblock {\em J. Differential Geom.}, 70(3):329--452, 2005.

\bibitem{joyal1993braided}
Andr{\'e} Joyal and Ross Street.
\newblock Braided tensor categories.
\newblock {\em Adv. Math.}, 102(1):20--78, 1993.

\bibitem{kelly1974doctrinal}
G.~M. Kelly.
\newblock Doctrinal adjunction.
\newblock In {\em Category {S}eminar ({P}roc. {S}em., {S}ydney, 1972/1973)},
  pages 257--280. Lecture Notes in Math., Vol. 420. Springer, Berlin, 1974.

\bibitem{leinster2004higher}
Tom Leinster.
\newblock {\em Higher operads, higher categories}, volume 298 of {\em London
  Mathematical Society Lecture Note Series}.
\newblock Cambridge University Press, Cambridge, 2004.

\bibitem{Lurie-Derived-I}
Jacob Lurie.
\newblock {Stable Infinity Categories}.
\newblock {\em arXiv preprint math.CT/0608228}, 2007.

\bibitem{Upmeier:2011fk}
Markus Upmeier.
\newblock Products in generalized differential cohomology.
\newblock 12 2011.

\bibitem{dr-arbeit}
Markus Upmeier.
\newblock {\em Algebraic Structure and Integration in Generalized Differential
  Cohomology}.
\newblock PhD thesis, Georg-August Universit{\"a}t G{\"o}ttingen, 2013.

\end{thebibliography}
\bibliographystyle{plain}

\end{document}